\newtheorem{defn}{Definition}
\newtheorem{thm}{Theorem}
\newtheorem{prop}{Proposition}
\newtheorem{lem}{Lemma}
\newtheorem{cor}{Corollary}
\theoremstyle{remark}
\newcommand{\Z}{\mathbb{Z}}
\newcommand{\mycomment}[1]{}
\title{On the singularity probability of random circulant Bernoulli matrices}
\author{Niklas Miller}
\date{\today}
\begin{document}

\maketitle

\begin{abstract}
    A complete characterization of the asymptotic singularity probability of random circulant Bernoulli matrices is given for all values of the probability parameter.
\end{abstract}

\section{Introduction}

In random matrix theory, structured random matrices have gained interest in recent years. The spectral properties of random Toeplitz, circulant, Hankel and other structured matrix models exhibit interesting limiting features, in contrast to the situation with classical matrix models, where the limiting spectral distribution is usually a rather dull distribution such as the semi-circular or circular distribution. 

Although many of the asymptotic spectral properties of random structured matrices have been established in the literature before \cite{bose2002limiting,meckes2009some}, a question which has not been answered before to satisfactory precision is the following: what is the asymptotic singularity probability of a random structured matrix? In this paper, we address this question for an important class of random structured matrices: circulant (signed) Bernoulli matrices. 

Circulant matrices were
first introduced in 1846 by Catalan \cite{catalan1846recherches}, and have since then found many applications in mathematics, coding theory, combinatorial design theory, time-series analysis, machine learning, signal processing, image compression, and specifically in applications involving the Discrete Fourier Transform (DFT). They can be used to elegantly derive the solution sets of low-degree polynomials \cite{kalman2001polynomial}. The convolution operation between two vectors can conveniently be represented as a matrix multiplication of a circulant matrix and a vector. Circulant matrices with entries in $\{0,1\}$ or $\{1,-1\}$ appear frequently in coding and communications theory, combinatorics and graph theory.

Let us briefly review what is known about the singularity probability problem for general $n\times n$ signed Bernoulli matrices with i.i.d. entries. Let $p_q(n)$ denote the probability that an $n\times n$ signed Bernoulli matrix $M_n$, where each entry is $1$ with probability $q$ and $-1$ with probability $1-q$, has vanishing determinant. It was first proved in 1967 by Koml\'os \cite{komlos1967determinant} that $\lim_{n\to \infty} p_{\frac{1}{2}}\left(n\right)=0$. The bound $p_{\frac{1}{2}}\left(n\right)\geq \frac{1}{2^n}$ follows by considering the probability that two given rows are equal. It had been conjectured that this is the dominant source of singularity, i.e., that $p_{\frac{1}{2}}\left(n\right)=\left(\frac{1}{2}+o(1)\right)^n$ as $n\to \infty$. The first exponentially decreasing bound on $p_{\frac{1}{2}}\left(n\right)$ was obtained by Kahn, Koml\'os and Szemer\'edi in 1995 \cite{kahn1995probability}: $p_{\frac{1}{2}}\left(n\right)\leq 0.999^n$. Subsequently, gradual improvements to the basis of the exponential were made by Tao and Wu  \cite{tao2005random, tao2007singularity} and by Bourgain, Vu, and Wood \cite{BOURGAIN2010559}. The conjecture was finally proved by Tikhomirov \cite{tikhomirov2020singularity}, who also showed that more generally, $p_q(n)= (1-q+o(1))^n$ for all $q\in (0,\frac{1}{2}]$.

When the entries of a random matrix exhibit dependencies, less is currently understood about the corresponding problem. It was recently shown that for symmetric signed Bernoulli matrices, the singularity probability is exponentially small \cite{campos2024singularity}. For the class of circulant matrices, we introduce the following notation. $P_q(n)$ (respectively, $P_q^{+}(n)$) is the singularity probability of an $n\times n$ circulant matrix whose first row has each entry $1$ with probability $q$ and 0 (respectively, $-1$) with probability $1-q$, independently of each other. The results of Meckes \cite{meckes2009some} imply that $P_{\frac{1}{2}}^{+}\left(n\right)=O\left(\frac{1}{n}\right)$, and there exist absolute constants $c_1,c_2>0$ such that $\frac{c_1}{\sqrt{n}}\leq P_{\frac{1}{2}}^{+}\left(n\right)\leq \frac{c_2}{\sqrt{n}}$ for all \emph{even} $n$. Meckes writes \cite{meckes2009some}: "It would be nice to have a more complete description of the dependence of the singularity probability on the prime factorization of $n$".

The purpose of the present paper to give a complete description of the asymptotic singularity probability of (signed) circulant Bernoulli matrices with parameter $q$, for all values $q\in [0,1]$. More precisely, the next theorem is our main contribution. We let $p:\mathbb{Z}_{\geq 2}\to\mathbb{Z}_{\geq 1}$ be the function which maps $n$ to the smallest prime divisor $p(n)$ of $n$. For functions $f,g:\mathbb{Z}_{\geq 1}\to\mathbb{R}_{>0}$, the asymptotic notation $f\sim g$ means that $\lim_{n\to\infty}\frac{f(n)}{g(n)}=1$. 

\begin{thm}
    \label{thm:main0}
    Let $q\in (0,1)$ be fixed. Then
    \begin{align*}
    P_q(n) &\sim \sum_{k=0}^{\frac{n}{p(n)}} \phi_q(k,n/p(n))^{p(n)}
    \end{align*}
    as $n\to\infty$, where $k\mapsto \phi_q(k,n)=q^k(1-q)^{n-k}\binom{n}{k}$ is the probability density function of the binomial distribution with parameters $n$ and $q$, and $p(n)$ is the smallest prime divisor of $n$.
\end{thm}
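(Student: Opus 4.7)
The plan is to first diagonalize via the DFT: any circulant matrix $C$ with first row $(c_0, \ldots, c_{n-1})$ satisfies $\det C = \prod_{j=0}^{n-1} f(\omega^j)$, where $f(x) = \sum_{i=0}^{n-1} c_i x^i$ and $\omega = e^{2\pi i / n}$. Thus $C$ is singular iff $f$ vanishes at some $n$-th root of unity, which via the factorization $x^n - 1 = \prod_{d \mid n} \Phi_d(x)$ into cyclotomic polynomials is equivalent to the event $E_d := \{\Phi_d \mid f\}$ occurring for some $d \mid n$. The problem then reduces to estimating $\Pr\!\bigl[\bigcup_{d \mid n} E_d\bigr]$.

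Next I would compute the candidate main term exactly. Set $p := p(n)$ and group the coefficients by residue modulo $p$, defining $S_j := \sum_{\ell = 0}^{n/p - 1} c_{j + \ell p}$. For a primitive $p$-th root of unity $\zeta$, one has $f(\zeta) = \sum_{j=0}^{p-1} \zeta^j S_j$, and since the minimal $\Q$-linear relation among $1, \zeta, \ldots, \zeta^{p-1}$ is $\sum_j \zeta^j = 0$, the event $E_p$ is equivalent to $S_0 = \cdots = S_{p-1}$. The $S_j$ being i.i.d.\ $\mathrm{Binomial}(n/p, q)$, this gives exactly
\[
\Pr[E_p] = \sum_{k=0}^{n/p} \phi_q(k, n/p)^p,
\]
the right-hand side of the theorem; a local CLT estimate then yields $\Pr[E_p] = \Theta\!\bigl(n^{-(p-1)/2}\bigr)$.

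To close the proof I would invoke the union bound $P_q(n) \leq \Pr[E_p] + \sum_{d \mid n,\ d \neq p} \Pr[E_d]$ and show that the tail sum is $o(\Pr[E_p])$. The divisor $d = 1$ contributes $(1-q)^n$, exponentially small. For a prime $r \mid n$ with $r > p$, the same residue-class argument gives $\Pr[E_r] = \Theta\!\bigl(n^{-(r-1)/2}\bigr)$, polynomially smaller than the main term, and there are only $O(\log n)$ such primes. For composite $d \mid n$ one checks $\varphi(d) \geq p > p - 1 = \varphi(p)$, so a uniform bound of the form $\Pr[E_d] = O\!\bigl(n^{-\varphi(d)/2}\bigr)$ would suffice, since the divisor count of $n$ is $n^{o(1)}$.

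The main obstacle will be establishing precisely this uniform bound for composite $d$: the condition $\Phi_d \mid f$ amounts to $\varphi(d)$ linear constraints over $\Z$ whose coefficients are algebraic integers in $\Z[\zeta_d]$, and the clean independence of the sums $S_j$ exploited in the prime case is no longer directly available. I would handle this by decomposing $c$ modulo the smallest prime factor $p_0$ of $d$, reducing $E_d$ to a vanishing condition in $\Z[\zeta_{d/p_0}]$ for a linear combination of the independent binomial sums $(S_0, \ldots, S_{p_0-1})$, and then applying a Halász-type anti-concentration inequality with constants controlled uniformly in $d$ (using that $q \in (0,1)$ is fixed).
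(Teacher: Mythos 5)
Your reduction to the events $E_d=\{\Phi_d\mid f\}$, your exact evaluation of $\Pr[E_p]$ via the residue-class sums $S_j$, and the overall union-bound strategy all coincide with the paper's proof; the main term is identified correctly. The gap is in the step you yourself flag as the main obstacle, and it is more serious than a missing technical lemma. The uniform bound $\Pr[E_d]=O\bigl(n^{-\varphi(d)/2}\bigr)$ that you propose to establish for composite $d$ is false for large divisors: taking $d=n$ (with $n$ composite), the event $E_n$ contains $\{f=0\}$, so $\Pr[E_n]\geq(1-q)^n$, whereas $n^{-\varphi(n)/2}=\exp\bigl(-\tfrac12\varphi(n)\log n\bigr)$ decays faster than any $e^{-cn}$ because $\varphi(n)\log n/n\to\infty$. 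The correct bound must be phrased in terms of $n/d$, namely $\Pr[E_d]\leq M(q,n/d)^{\varphi(d)}$ with $M(q,m)=\max_k\phi_q(k,m)$, and even this degenerates when $n/d$ is bounded, which forces a separate treatment of divisors $d\geq n^{1/2+\delta}$ via the trivial estimate $M(q,n/d)\leq\max\{q,1-q\}<1$. The paper obtains $\Pr[E_d]\leq M(q,n/d)^{\varphi(d)}$ by an elementary lattice argument that replaces your proposed Hal\'asz-type machinery: the solution set $\Phi_d(x)R_d\subseteq\Z^d$ is a sublattice of rank $d-\varphi(d)$ whose Hermite normal form is $\bigl(I_{d-\varphi(d)}\mid A\bigr)$, so $\varphi(d)$ of the independent binomial coordinates $s_j^{(d)}$ are forced to take prescribed values once the remaining $d-\varphi(d)$ are chosen, each forced coordinate costing a factor at most $M(q,n/d)$. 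Your route through anti-concentration in $\Z[\zeta_{d/p_0}]$ would in addition need constants uniform over the $n^{o(1)}$ divisors of $n$, which you have not supplied.

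A second, smaller issue: your comparisons tacitly assume that $p=p(n)$ stays bounded. The theorem is claimed for all composite $n$, including those whose smallest prime factor is as large as $\sqrt{n}$; there the asymptotic $\Pr[E_r]=\Theta\bigl(n^{-(r-1)/2}\bigr)$ for primes $r\mid n$ hides constants of the form $C^r$ with $r$ possibly of order $\sqrt{n}$, and the ratio $\Pr[E_r]/\Pr[E_p]$ must be controlled by hand rather than declared polynomially small. The paper does this by splitting the divisors into ranges ($d\geq 10p(n)$ versus $p(n)<d<10p(n)$, prime versus composite) and using $(1+(d-p)/p)^{p/2}\leq e^{(d-p)/2}$ to absorb the exponential constants; the same caveat applies to your closing estimate $\tau(n)\,n^{-\varphi(d)/2}=o\bigl(\Pr[E_p]\bigr)$, whose constant-tracking only works for bounded $p$. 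These points are repairable, but together with the composite-$d$ bound they constitute the actual content of the proof beyond the identification of the main term.
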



The analogous result for $P_q^{+}(n)$ is very similar and can be found in Corollary \ref{cor:main}. Although the determinant of an $n\times n$ circulant matrix is highly dependent on the prime factorization of $n$, Theorem \ref{thm:main0} gives a unified expression for all $n$, and in particular shows that the smallest prime divisor of $n$ determines the singularity probability. Note that when $n$ is prime, $P_q(n)=q^n+(1-q)^n$, so the asymptotic expression in Theorem \ref{thm:main0} is in fact an equality. 

In Proposition \ref{prop:asymptotic2}, we derive an asymptotic expression for the sum appearing in Theorem \ref{thm:main0}, which might have independent value. It generalizes an asymptotic formula of sums of powers of binomial coefficients derived in \cite{farmer2004asymptotic}. Using the proposition, a more transparent version of Theorem \ref{thm:main0} can be stated.

\begin{cor}
    Let $p$ be a fixed prime, and let $q\in (0,1)$ be fixed. Then 
    \begin{align*}
        P_q(n)&\sim\frac{1}{\sqrt{p}}\left(\frac{p}{2\pi q(1-q)}\right)^{\frac{p-1}{2}} \frac{1}{n^{\frac{p-1}{2}}}
    \end{align*}
    as $n\to\infty$, where $n$ is a composite integer whose smallest prime divisor is $p$.
\end{cor}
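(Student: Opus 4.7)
The plan is to apply Theorem~\ref{thm:main0}, which reduces the corollary to an asymptotic evaluation of
\begin{equation*}
S(m) \;:=\; \sum_{k=0}^{m} \phi_q(k,m)^p, \qquad m \;:=\; n/p,
\end{equation*}
as $m \to \infty$ with $p$ a fixed prime. Once one shows $S(m) \sim p^{-1/2}\bigl(2\pi m q(1-q)\bigr)^{-(p-1)/2}$, substituting $m=n/p$ and rearranging the powers of $p$ and $n$ yields exactly the stated formula. This reduction makes the body of the proof the asymptotic for $S(m)$, which is (essentially by definition) the content of Proposition~\ref{prop:asymptotic2} announced in the excerpt.

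The asymptotic for $S(m)$ is a Laplace-type estimate. By the local central limit theorem (de Moivre--Laplace) for the binomial distribution, uniformly on the ``bulk'' $|k-mq|\leq m^{2/3}$,
\begin{equation*}
\phi_q(k,m)^p \;=\; \frac{(1+o(1))}{(2\pi m q(1-q))^{p/2}}\exp\!\left(-\frac{p\,(k-mq)^2}{2m q(1-q)}\right),
\end{equation*}
where the $(1+o(1))^p = 1+o(1)$ step uses that $p$ is fixed. Summing over the bulk and approximating by a Gaussian integral (via Euler--Maclaurin, or cleanly via Poisson summation, noting that the rescaled Gaussian has standard deviation $\sqrt{mq(1-q)/p}\to\infty$) gives the Gaussian mass $\sqrt{2\pi m q(1-q)/p}$, so the bulk sum is asymptotic to $p^{-1/2}\bigl(2\pi m q(1-q)\bigr)^{-(p-1)/2}$, which is the claimed main term. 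The contribution from the ``tail'' $|k-mq|>m^{2/3}$ is handled by a Chernoff bound, giving $\phi_q(k,m) \leq e^{-c\,m^{1/3}}$ uniformly there for some $c>0$, so the tail sum of $p$-th powers is super-polynomially small in $m$ and negligible against the main term of order $m^{-(p-1)/2}$.

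The main obstacle is the quantitative local CLT with a multiplicative error sharp enough that, after being raised to a fixed power $p$ and aggregated across $\Theta(m^{2/3})$ bulk indices, the total error is $o(m^{-(p-1)/2})$. This follows from the standard refined local CLT for binomials, which supplies multiplicative error $1+O(m^{-1/2}+(k-mq)^3/m^2)$ on the bulk, but its careful bookkeeping is the most technical part of the argument. Once it is in place, the integral approximation and the tail bound are routine, and the corollary follows by the trivial substitution $m=n/p$ described above.
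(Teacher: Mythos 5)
Your proposal is correct and follows the same route the paper intends: apply Theorem~\ref{thm:main0} to reduce to the sum $\sum_{k}\phi_q(k,n/p)^{p}$ and then invoke the fixed-exponent case $m(n)\equiv p=o(\sqrt{n})$ of Proposition~\ref{prop:asymptotic2}, whose proof is exactly the de Moivre--Laplace plus Gaussian-integral argument you sketch (the paper uses a fixed-width window $|k-nq|\le c\sqrt{nq(1-q)}$ and a monotonicity comparison for the tail rather than an $m^{2/3}$ window with a Chernoff bound, but this is an immaterial variation). The final substitution $m=n/p$ and the resulting constant $\frac{1}{\sqrt{p}}\left(\frac{p}{2\pi q(1-q)}\right)^{\frac{p-1}{2}}$ check out.
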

This shows that once we fix a prime $p$ and $q\in(0,1)$, then for all sufficiently large $n$ whose smallest prime divisor is $p$, the singularity probability decays at the same rate as $\frac{1}{n^{\frac{p-1}{2}}}$. Thus, for even $n$ and $q=\frac{1}{2}$, we obtain
\begin{align*}
P_q(n) \sim \frac{2}{\sqrt{2\pi n}}.
\end{align*}

The paper is organized in the following way. Section \ref{sec:preliminaries} presents the different notions used in the paper. In Section \ref{sec:pq_bounds}, bounds on the probability that a given eigenvalue of a random circulant Bernoulli matrix vanishes are derived, as well as exact singularity probabilities in certain special cases. In Section \ref{sec:mq_bounds}, the asymptotic behavior of sums of powers of $\phi_q(k,n)$ is analyzed. Finally, in Section \ref{sec:main}, the main theorem of the paper is proved.

\section{Preliminaries}
\label{sec:preliminaries}
Let $G$ be a finite abelian group. We define the notion of $G$-matrix as follows.

\begin{defn}
	\label{def:G_mat}
    Let $G$ be a finite abelian group.
	Let $\{x_g:g\in G\}$ be a set of indeterminates. A $G$-matrix $M$ is a matrix indexed by $G\times G$ such that entry $(g,h)$ of $M$ equals $x_{gh^{-1}}$.
\end{defn}

Note that we have to fix some ordering of the elements of $G$ for the indexing. The homogeneous polynomial $\Delta(G)=\det((x_{gh^{-1}})_{g,h\in G})$ of degree $|G|$ in the variables $x_g$ ($g\in G$) is called the \emph{group determinant} of $G$ and it is independent of the ordering of the elements $G$ chosen for the indexing. When $G=C_n$, the cyclic group of order $n$, a $G$-matrix is called a \emph{circulant matrix}, and it has the following form:
\begin{align*}
	\text{circ}(x_0,x_1,\dots,x_{n-1}) &= \begin{pmatrix}
		x_0 & x_1 & \cdots & x_{n-1} \\
		x_{n-1} & x_0 & \cdots & x_{n-2} \\
		\vdots & \vdots & \cdots & \vdots \\
		x_1 & x_2 & \cdots & x_{0} \\
	\end{pmatrix},
\end{align*} 
where the indexing is modulo $n$. The group determinant $\Delta(G)$ of a finite abelian group splits into linear factors over $\mathbb{C}$, as was noted by Dedekind at the end of the 19th century. Recall that a character of a finite abelian group $G$ is a homomorphism $\chi: G\to \mathbb{C}$. The set of characters $\widehat{G}$ forms a group isomorphic to $G$ under the product defined by $(\chi_1\cdot \chi_2)(g)=\chi_1(g)\chi_2(g)$ for all $g\in G$.

\begin{thm}
    \label{thm:Dedekind}
 	Let $G$ be a finite abelian group and let $\widehat{G}$ be the group of characters. Then $$\Delta(G)=\prod_{\chi\in \widehat{G}}\left(\sum_{g\in G} x_g \chi(g) \right).$$
\end{thm}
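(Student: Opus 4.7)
My plan is to simultaneously diagonalize the $G$-matrix $M = (x_{gh^{-1}})_{g,h\in G}$ via the character table of $G$, treating the claim as a polynomial identity in the indeterminates $\{x_g\}$ that I verify by working with $M$ as a matrix over $\mathbb{C}[x_g : g \in G]$. Since the determinant is polynomial in the entries, once I exhibit a similarity $M = FDF^{-1}$ with $D$ diagonal and $F$ an invertible \emph{numerical} matrix (not depending on the $x_g$), the identity $\det M = \det D$ becomes automatic and yields the claimed product formula.

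The core computation is to check that for each character $\chi \in \widehat{G}$, the vector $v_\chi = (\chi(g))_{g\in G}$ is an eigenvector of $M$. A one-line substitution $k = gh^{-1}$ in $(Mv_\chi)_g = \sum_h x_{gh^{-1}}\chi(h)$, together with $\chi(k^{-1}) = \overline{\chi(k)}$, yields the eigenvalue $\lambda_\chi = \sum_{g\in G} x_g \overline{\chi(g)}$. The orthogonality relations for characters, $\sum_{g} \chi_1(g)\overline{\chi_2(g)} = |G|\,\delta_{\chi_1,\chi_2}$, imply that the matrix $F$ whose columns are the $v_\chi$ satisfies $F^*F = |G|\, I$, hence is invertible with nonzero scalar determinant. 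Thus $M$ is diagonalized with eigenvalues $\{\lambda_\chi\}_{\chi\in \widehat{G}}$, and
\begin{align*}
\Delta(G) = \det M = \prod_{\chi \in \widehat{G}} \sum_{g \in G} x_g \overline{\chi(g)}.
\end{align*}
Reindexing via the involution $\chi \mapsto \overline{\chi}$ on $\widehat{G}$ converts $\overline{\chi(g)}$ into $\chi(g)$ and recovers the statement of the theorem.

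I do not anticipate a serious obstacle; the only mild subtlety is that the eigenvalues naturally emerge with $\overline{\chi}$ in place of $\chi$, cleaned up by the final bijection of $\widehat{G}$. A conceptual alternative, worth mentioning, is to observe that $M = \sum_g x_g \rho(g)$ where $\rho$ is the regular representation of $G$, and that for abelian $G$ the group algebra decomposes as $\mathbb{C}[G] \cong \bigoplus_{\chi \in \widehat{G}} \mathbb{C}$ under $g \mapsto (\chi(g))_\chi$; the determinant of $M$ then reads off immediately as the product of the images of $\sum_g x_g g$ in each one-dimensional summand.
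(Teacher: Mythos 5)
The paper states Theorem~\ref{thm:Dedekind} without proof, citing it as Dedekind's classical result, so there is no in-paper argument to compare against. Your proof is correct and is the standard one: you show each character column $v_\chi=(\chi(h))_h$ is an eigenvector of $M$ with eigenvalue $\sum_g x_g\overline{\chi(g)}$, use character orthogonality to get an invertible numerical diagonalizer $F$ so that $\det M=\prod_\chi\lambda_\chi$ holds as a polynomial identity, and then reindex by the involution $\chi\mapsto\overline{\chi}$ of $\widehat{G}$ to land on the stated formula; the regular-representation remark at the end is an equally valid, if anything cleaner, way to phrase the same decomposition.
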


Specializing to the group $G=C_n$, we see that the determinant of an $n\times n$ circulant matrix $C_n=\text{circ}(c_0,c_1,\dots,c_{n-1})$ is given by 

\begin{align*}
    \det(C_n)=\prod_{j=0}^{n-1} \left(\sum_{k=0}^{n-1} c_k \zeta_n^{k j}\right)
\end{align*}

where $\zeta_n=\exp\left(\frac{2\pi i}{n}\right)$ and $i^2=-1$. Therefore, $\det(C_n)$ vanishes if and only if $\lambda_j=\sum_{k=0}^{n-1} c_k \zeta_{n}^{kj}=0$ for some $j=0,1,\dots,n-1$. 

In this paper, the ring of circulant matrices with entries in $\mathbb{Z}$ is viewed as the quotient ring $R_n=\mathbb{Z}[x]/(x^n-1)$. Thus, $\text{circ}(c_0,c_1,\dots,c_{n-1})$ is singular if and only if the polynomial $f(x)=\sum_{j=0}^{n-1} c_j x^j \in R_n$ satisfies $f(x)\equiv 0\pmod{\Phi_d(x)}$ for some divisor $d$ of $n$, where $\Phi_d(x)$ is the $d$:th cyclotomic polynomial, defined as the minimal polynomial of any primitive complex $d$:th root of $1$. Explicitly, 
\begin{align*}
    \Phi_n(x)=\prod_{\gcd(j,n)=1}(x-\zeta_n^j).
\end{align*}
Alternatively, $\Phi_n(x)$ can be defined recursively via $\Phi_1(x)=x-1$ and 
\begin{align}
\label{eq:recursion}
    \Phi_{np}(x) &= \begin{cases}\Phi_{n}(x^p),& \text{ if $p\mid n$},\\
    \frac{\Phi_{n}(x^p)}{\Phi_n(x)},& \text{ if $p\nmid n$},
    \end{cases}
\end{align}
 where $p$ is prime and $n$ any positive integer. This shows that 
 \begin{align}
 \label{eq:cyclo_prime}
 \Phi_{p^k}(x)=1+x^{p^{k-1}}+\cdots+x^{(p-1)p^{k-1}}    
 \end{align}
 for all $k\geq 1$ and $p$ prime. As a direct consequence of (\ref{eq:recursion}), the cyclotomic polynomial satisfies the following lemma.

\begin{lem}
    \label{lem:cyclotomic_remainder}
    Let $n$ be a positive integer and let $p$ be a prime. Then $\Phi_{np}(x)\equiv0\pmod{p,\Phi_{n}(x)}$.
\end{lem}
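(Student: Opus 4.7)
The plan is to prove the congruence directly from the recursion (\ref{eq:recursion}) together with the Frobenius-type identity $f(x^p)\equiv f(x)^p\pmod{p}$ valid for any $f(x)\in\mathbb{Z}[x]$ (a consequence of the binomial theorem modulo~$p$ applied coefficient-wise).

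First I would split into the two cases of the recursion. If $p\mid n$, then $\Phi_{np}(x)=\Phi_n(x^p)$, and applying the Frobenius identity to $\Phi_n$ gives
\[
\Phi_{np}(x)=\Phi_n(x^p)\equiv \Phi_n(x)^p \pmod{p},
\]
which is manifestly divisible by $\Phi_n(x)$ modulo $p$. If instead $p\nmid n$, then $\Phi_{np}(x)=\Phi_n(x^p)/\Phi_n(x)$, and the same Frobenius identity yields
\[
\Phi_n(x^p)\equiv \Phi_n(x)^p \pmod{p},
\]
so after dividing by $\Phi_n(x)$ (which is monic, hence division makes sense over $\mathbb{Z}$) one obtains $\Phi_{np}(x)\equiv \Phi_n(x)^{p-1}\pmod{p}$. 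Since $p\geq 2$, this too is divisible by $\Phi_n(x)$ modulo $p$, giving the required congruence in the ideal $(p,\Phi_n(x))$.

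The only subtle point is justifying that the reduction $\Phi_n(x^p)/\Phi_n(x)\equiv \Phi_n(x)^{p-1}\pmod p$ is legitimate as an identity of polynomials in $\mathbb{Z}[x]$: since $\Phi_n(x)$ is monic with integer coefficients, the quotient $\Phi_n(x^p)/\Phi_n(x)$ is already a polynomial in $\mathbb{Z}[x]$ (by the recursion), and passing the congruence $\Phi_n(x^p)=\Phi_n(x)\cdot \Phi_{np}(x)\equiv \Phi_n(x)^p \pmod p$ to $\mathbb{F}_p[x]$ and cancelling the nonzero factor $\Phi_n(x)$ in the integral domain $\mathbb{F}_p[x]$ yields $\Phi_{np}(x)\equiv \Phi_n(x)^{p-1}\pmod p$. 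No serious obstacle arises; this is essentially a one-line argument once the Frobenius identity is invoked.
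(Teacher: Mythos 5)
Your proof is correct and follows exactly the route the paper intends: the paper states the lemma as a direct consequence of the recursion (\ref{eq:recursion}), and your argument supplies the standard details via the Frobenius identity $f(x^p)\equiv f(x)^p\pmod{p}$ in both cases of the recursion. The cancellation of $\Phi_n(x)$ in the integral domain $\mathbb{F}_p[x]$ is handled properly, so there is nothing to add.
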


Now we introduce the probabilistic notation. Let $n$ be a positive integer and let $c=(c_j)_{j=0}^{n-1}$ be a sequence of i.i.d. Bernoulli distributed (with parameter $q\in (0,1)$) random variables, i.e., $c_j=1$ with probability $q$ and $c_j=0$ with probability $1-q$. Let $f(x)=\sum_{j=0}^{n-1} c_j x^j$ and $f^+(x)=\sum_{j=0}^{n-1} (2c_j-1) x^j$ denote the random polynomials in $R_n$ obtained from the sequence $c$ (the dependence on $c$ is suppressed for notational convenience).

For each divisor $d\mid n$, define the events $\mathcal{P}_{d,n}=\{f(x)\equiv 0\pmod{\Phi_d(x)}\}$ and similarly $\mathcal{P}_{d,n}^+=\{f^+(x)\equiv 0\pmod{\Phi_d(x)}\}$. We are interested in the following probabilities.

\begin{align*}
    P_q(d,n) &= \mathbb{P}[\mathcal{P}_{d,n}],\qquad P_q(n) = \mathbb{P}[\cup_{d\mid n}\mathcal{P}_{d,n}],\\
    P_q^{+}(d,n) &= \mathbb{P}[\mathcal{P}_{d,n}^+], \qquad P_q^+(n)=\mathbb{P}[\cup_{d\mid n} \mathcal{P}_{d,n}^+].
\end{align*}

Note that $P_q(n)$ (respectively, $P_q^+(n)$) is the singularity probability of a binary (respectively, signed) circulant matrix whose entries are (signed) Bernoulli distributed with parameter $q$. For all divisors $d\neq 1$ of $n$, $P_q(d,n)=P_q^+(d,n)$. Indeed, if $\sum_{j=0}^{n-1} c_j x^j\equiv 0\pmod{\Phi_d(x)}$ then
\begin{align*}
    \sum_{j=0}^{n-1}(2c_j-1) x^j= 2\sum_{j=0}^{n-1} c_j x^j -\frac{x^n-1}{x-1}\equiv 0\pmod{\Phi_d(x)}.
\end{align*}
Moreover, $P_q(1,n)=(1-q)^n$ for all $n$ and $P_q^+(1,n)=0$ if $n$ is odd and $P_q^+(1,n)=\binom{n}{n/2}q^{\frac{n}{2}}(1-q)^{\frac{n}{2}}$ if $n$ is even. Let $\phi_q(k,n)=q^k(1-q)^{n-k}\binom{n}{k}$, so that $k\mapsto \phi_q(k,n)$ is the probability density function of the binomial distribution with parameters $n$ and $q$.

The following asymptotic notation is used throughout the article. First, $\mathbb{N}$ is the symbol used for the set $\{1,2,3\dots\}$. If $f,g:\mathbb{N}\to\mathbb{R}_{>0}$ are functions, then $f(n)=O(g(n))$ means that there exist positive constants $C,N>0$ such that $f(n)\leq C g(n)$ for all $n\geq N$. The notation $f(n)=\Theta(g(n))$ means that there exist constants $C_1,C_2,N>0$ such that $C_1 g(n)\leq f(n)\leq C_2 g(n)$ for all $n\geq N$. Finally, $f(n)=o(g(n))$ is the notation used for $\lim_{n\to\infty}\frac{f(n)}{g(n)}=0$ and $f(n)\sim g(n)$ is the notation used for $\lim_{n\to\infty}\frac{f(n)}{g(n)}=1$, or equivalently, $f(n)=(1+o(1))g(n)$.

\mycomment{

The following lemma, proven in \cite{LL}, gives a description of the principal ideal $(\Phi_n(x))R_n$, i.e. the ideal of polynomials satisfying $f(x)\equiv 0\pmod{\Phi_n(x)}$, as a sum of simpler ideals.

\begin{lem}
    \label{lem:LL}
    Let $n=p_1^{a_1}\cdots p_s^{a_s}$ be the decomposition of $n$ into a product of primes. We have
    \begin{align*}
        (\Phi_n(x))R_n = \sum_{i=1}^{s} \left(\Phi_{p_i}(x^{\frac{n}{p_i}})\right) R_n.
    \end{align*}
    Further, if $s\leq 2$, then
    \begin{align*}
        (\Phi_n(x))S_n = \sum_{i=1}^{s} \left(\Phi_{p_i}(x^{\frac{n}{p_i}})\right) S_n,
    \end{align*}
    where $S_n\subseteq R_n$ denotes the semi-ring of polynomials with non-negative coefficients.
\end{lem}

Our goal in the coming sections is to investigate the behavior of $N(n):=|\mathcal{M}_n^0|$, or equivalently, the probability $\frac{N(n)}{2^n}$ that a uniformly randomly chosen binary circulant matrix is singular, as $n$ varies.

}

\section{Bounds on $P_q(d,n)$}
\label{sec:pq_bounds}
In this section, we derive upper and lower bounds on $P_q(d,n)$. The first key point towards our main theorem is the following simple observation. Let $d$ be a divisor of a positive integer $n$ and let $f(x)\in R_n$. Then $f(x) \equiv 0\pmod{\Phi_d(x)}$ if and only if $\overline{f}(x) \equiv 0\pmod{\Phi_d(x)}$ where $\overline{f}(x)$ denotes the image of $f(x)$ in the ring $R_d=\Z[x]/(x^d-1)$. Let $\overline{f}(x)=\sum_{j=0}^{d-1} s_j^{(d)} x^j$ be the image of $f(x)=\sum_{j=0}^{n-1} c_j x^j$ in $R_d$. Then the coefficients $s_j^{(d)}=\sum_{i\equiv j \pmod{d}} c_i$ of $\overline{f}$ are independent random variables which follow the binomial distribution with parameters $\frac{n}{d}$ and $q$. Hence, $\mathbb{P}[s_j^{(d)}=k]=\phi_q(k,\frac{n}{d})$ for all $j=0,1,\dots,d-1$. Since the $s_j^{(d)}$ are independent, 
\begin{align}
\label{eq:prob}
    P_q(d,n) &=\mathbb{P}[\overline{f}(x)\in \Phi_d(x)R_d]=\sum_{(s_0^{(d)},\dots,s_{d-1}^{(d)})\in \mathcal{S}_d}\prod_{j=0}^{d-1} \phi_q(s_j^{(d)},n/d),
\end{align}
where $\mathcal{S}_d$ denotes the subset of the $\Z$-module $M_d= \Phi_d(x) R_d\subseteq R_d\cong \mathbb{Z}^d$ consisting of vectors with coefficients in $[0,\frac{n}{d}]$ with respect to the standard basis. Note that $M_d$ has rank $d-\varphi(d)$, where $\varphi(d)$ is Euler's totient function.

If $d=p$ is prime, then by (\ref{eq:cyclo_prime}) $$\mathcal{S}_p=\left\{(a,a,\dots,a):0\leq a\leq \frac{n}{p}\right\},$$ and (\ref{eq:prob}) simplifies to
\begin{align}
\label{eq:prime}
    P_q(p,n)&=\sum_{k=0}^{\frac{n}{p}}\phi_q(k,n/p)^p.
\end{align}
More generally, when $d=p^m$ is a power of a prime, again by (\ref{eq:cyclo_prime}) $$\mathcal{S}_{p^{m}}=\left\{(b_0,b_1,\dots,b_{p^m-1}): 0\leq b_i\leq \frac{n}{p^{m}},\ b_i=b_j\text{ if $i\equiv j\pmod{p^{m-1}}$}\right\}.$$ Thus, (\ref{eq:prob}) simplifies to
\begin{align}
\label{eq:prime_power}
    P_q(p^m,n) &=\left(\sum_{k=0}^{\frac{n}{p^m}} \phi_q(k,n/p^m)^p\right)^{p^{m-1}}.
\end{align}
Using these identities and Lemma \ref{lem:cyclotomic_remainder}, one can obtain exact expressions for the probability $P_q(n)$, when $n$ is a small power of a prime or the product of two distinct primes.

\begin{lem}
    \label{lem:exact_probs}
    Let $p$ and $r$ be distinct prime numbers and $q\in(0,1)$. Then
    \begin{enumerate}
        \item $P_q(p)=q^p+(1-q)^{p}$,
        \item $P_q(p^2)=(q^p+(1-q)^p)^p+\sum_{k=0}^{p}\phi_q(k,p)^p-q^{p^2}-(1-q)^{p^2}$,
        \item $P_q(pr)=\sum_{k=0}^{p}\phi_q(k,p)^r+\sum_{k=0}^{r}\phi_q(k,r)^p-q^{pr}-(1-q)^{pr}$.
    \end{enumerate}
\end{lem}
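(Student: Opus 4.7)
The plan is to compute $P_q(n) = \mathbb{P}[\bigcup_{d \mid n} \mathcal{P}_{d,n}]$ by inclusion--exclusion over the divisor lattice, using (\ref{eq:prime}) and (\ref{eq:prime_power}) to evaluate the $\mathbb{P}[\mathcal{P}_{d,n}]$ and exploiting containments between the events to collapse the union. Throughout I use the trivial observation that $\mathcal{P}_{1,n} \subseteq \mathcal{P}_{d,n}$ for every $d \mid n$, since $\mathcal{P}_{1,n}$ forces $f=0$; so $\mathcal{P}_{1,n}$ can always be dropped. Part 1 is then immediate from (\ref{eq:prime}): $P_q(p) = \mathbb{P}[\mathcal{P}_{p,p}] = \sum_{k=0}^{1}\phi_q(k,1)^p = q^p + (1-q)^p$.

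For part 2, after dropping $\mathcal{P}_1$ inclusion--exclusion gives $P_q(p^2) = P_q(p,p^2) + P_q(p^2,p^2) - \mathbb{P}[\mathcal{P}_{p,p^2}\cap\mathcal{P}_{p^2,p^2}]$, with the first two terms coming directly from (\ref{eq:prime}) and (\ref{eq:prime_power}). The description of $\mathcal{S}_{p^2}$ preceding (\ref{eq:prime_power}) shows that $\mathcal{P}_{p^2,p^2}$ is the event that $(c_0,\dots,c_{p^2-1})$ is $p$-periodic, while $\mathcal{P}_{p,p^2}$ requires the $p$ period sums $s^{(p)}_j$ to coincide; together these force all $c_i$ to be equal, so the intersection probability is $q^{p^2}+(1-q)^{p^2}$.

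For part 3 the substantive step is the containment
\begin{align*}
\mathcal{P}_{pr,pr} \subseteq \mathcal{P}_{p,pr} \cup \mathcal{P}_{r,pr}.
\end{align*}
Granted this, the union over $d \mid pr$ collapses to $\mathcal{P}_p \cup \mathcal{P}_r$, and inclusion--exclusion together with (\ref{eq:prime}) yields $P_q(pr) = P_q(p,pr)+P_q(r,pr)-\mathbb{P}[\mathcal{P}_p\cap\mathcal{P}_r]$. For the intersection, if every $s^{(p)}_j = v$ and every $s^{(r)}_j = u$ then $rv = pu = \sum_i c_i$; since $\gcd(p,r)=1$ this common total is divisible by $pr$, hence equals $0$ or $pr$, forcing all $c_i$ to coincide.

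To prove the containment I use Fourier inversion on $\mathbb{Z}/pr\mathbb{Z}$. The event $\mathcal{P}_{pr,pr}$ says $\hat c(k):=f(\zeta_{pr}^k)$ vanishes whenever $\gcd(k,pr)=1$, so by inverse DFT and inclusion--exclusion on the set $\{k : p\mid k\text{ or }r\mid k\}$,
\begin{align*}
c_i = \frac{1}{p}\, s^{(r)}_{i \bmod r} + \frac{1}{r}\, s^{(p)}_{i \bmod p} - \frac{T}{pr},
\end{align*}
where $T=\sum_i c_i$. Writing $a_j = r\, s^{(r)}_j$ and $b_k = p\, s^{(p)}_k$, the constraint $c_i\in\{0,1\}$ becomes $a_j+b_k \in \{T,T+pr\}$ for every $(j,k)\in\{0,\dots,r-1\}\times\{0,\dots,p-1\}$, and every such pair is realized by CRT. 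If some $a_{j_1}\neq a_{j_2}$, then for each $k$ both $a_{j_1}+b_k$ and $a_{j_2}+b_k$ lie in $\{T,T+pr\}$, forcing $a_{j_2}-a_{j_1}=\pm pr$; taking $a_{j_2}=a_{j_1}+pr$ WLOG, we must have $a_{j_1}+b_k=T$ for every $k$ (otherwise $a_{j_2}+b_k=T+2pr$), so $b_k = T-a_{j_1}$ is independent of $k$, i.e.\ $\mathcal{P}_{p,pr}$ holds. This combinatorial step is the main obstacle; everything else is direct bookkeeping.
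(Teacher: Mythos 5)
Your proof is correct, and parts 1 and 2 follow the same route as the paper (inclusion--exclusion after discarding $\mathcal{P}_{1,n}$, with the intersection events identified as ``all $c_i$ equal''). The genuine divergence is in the key containment $\mathcal{P}_{pr,pr}\subseteq\mathcal{P}_{p,pr}\cup\mathcal{P}_{r,pr}$ of part 3. The paper deduces it algebraically from Lemma \ref{lem:cyclotomic_remainder}: reducing $f$ modulo $(p,\Phi_r(x))$ forces $s_i^{(r)}\equiv s_j^{(r)}\pmod{p}$, and since each $s_j^{(r)}$ lies in $[0,p]$ this splits into ``all equal'' (giving $\mathcal{P}_{r,pr}$) or ``all in $\{0,p\}$'' (giving $\mathcal{P}_{p,pr}$). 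You instead invert the DFT on $\mathbb{Z}/pr\mathbb{Z}$ to get the exact identity $pr\,c_i = r\,s^{(r)}_{i\bmod r}+p\,s^{(p)}_{i\bmod p}-T$, and then run a combinatorial argument on the constraint $a_j+b_k\in\{T,T+pr\}$ over all CRT pairs $(j,k)$; this is sound (if two $a_j$'s differ they must differ by exactly $pr$, which pins down all the $b_k$'s). Your route is self-contained and avoids the cyclotomic recursion entirely, at the cost of being longer; the paper's route is a two-line consequence of machinery it has already set up and reuses elsewhere. Both arguments for the intersection probability ($rv=pu=T$ with $\gcd(p,r)=1$ forcing $T\in\{0,pr\}$, versus the paper's $f(1)\equiv 0\pmod{pr}$) are the same observation in different clothing.
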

\begin{proof} 
 Note that $\mathcal{P}_{1,n}\subseteq\mathcal{P}_{d,n}$ for all divisors $d\mid n$, and recall that $P_q(n)=\mathbb{P}[\cup_{d\mid n} \mathcal{P}_{d,n}]$. The first part follows directly from (\ref{eq:prime}). For the second part, we have
\begin{align*}
    P_q(p^2)&=P_q(p^2,p^2)+P_q(p,p^2)-\mathbb{P}[\mathcal{P}_{p,p^2}\cap\mathcal{P}_{p^2,p^2}].
\end{align*}
    The claim follows from (\ref{eq:prime_power}) and by noting that $f(x)\equiv 0\pmod{\Phi_{p^2}(x)\Phi_p(x)}$ if and only if $f(x)=a(1+x+\cdots+x^{p^2-1})$ for $a\in\{0,1\}$. For the third part, suppose that $f(x)\equiv 0\pmod{\Phi_{pr}(x)}$. Lemma \ref{lem:cyclotomic_remainder} implies that $\overline{f}(x)= \sum_{j=0}^{r-1} s_j^{(r)}x^j \equiv 0 \pmod{p,\Phi_r(x)}$, so $s_i^{(r)}\equiv s_j^{(r)}\pmod{p}$ for all $i,j$. Since $0\leq s_j^{(r)}\leq p$, we have $s_i^{(r)}=s_j^{(r)}$ for all $i,j$ or $s_i^{(r)}\in \{0,p\}$ for all $i$. The former case implies $f(x)\equiv 0\pmod{\Phi_r(x)}$ and the latter case implies $f(x)\equiv 0\pmod{\Phi_p(x)}$. Thus, $\mathcal{P}_{pr,pr}\subseteq \mathcal{P}_{p,pr}\cup\mathcal{P}_{r,pr}$, so
    \begin{align*}
        P_q(pr) &=P_q(p,pr)+P_q(r,pr)-\mathbb{P}[\mathcal{P}_{p,pr}\cap\mathcal{P}_{r,pr}].
    \end{align*}
    The claim follows from (\ref{eq:prime}) and the observation that $f(x)\equiv 0\pmod{\Phi_p(x)\Phi_r(x)}$ implies $f(1)\equiv 0\pmod{pr}$ so $f(x)=a(1+x+\cdots+x^{pr-1})$ where $a\in\{0,1\}$.
\end{proof}

Lemma \ref{lem:exact_probs} shows that the exact probabilities $P_q(n)$ become increasingly complicated as the number of divisors of $n$ grows. In fact, not much is understood about the coefficients of the $n$:th cyclotomic polynomials themselves when $n$ is the product of at least three distinct primes. Despite this, the individual probabilities $P_q(d,n)$ can be controlled well, using the next proposition.

\mycomment{
\begin{prop}
    \label{prop:binomial_max}
    Let $q\in (0,1)$ and $n\in\mathbb{N}$. There exists a constant $C_q$ depending only on $q$ such that
    \begin{align*}
        \frac{C_q}{\sqrt{2\pi n q(1-q)}} \leq \max_{0\leq k\leq n} q^{k}(1-q)^{n-k} \binom{n}{k} \leq \min\left\{\frac{1}{\sqrt{2\pi n q(1-q)}},\max\left\{ q,1-q\right\}\right\}.
    \end{align*}
\end{prop}
\begin{proof}
    By Stirling's approximation, $n!=\sqrt{2\pi}n^{n+\frac{1}{2}}e^{-n+\epsilon_n}$ for all positive integers $n$, where $\frac{1}{12n+1}<\epsilon_n<\frac{1}{12n}$. Thus,
    \begin{align*}
        q^k(1-q)^{n-k}\binom{n}{k}&=q^k(1-q)^{n-k}\frac{n!}{k!(n-k)!}e^{\epsilon_n-\epsilon_k-\epsilon_{n-k}}\\
        &=\frac{1}{\sqrt{2\pi nq(1-q)}}\frac{e^{\epsilon_n-\epsilon_k-\epsilon_{n-k}}}{\left(1-\frac{k-np}{nq}\right)^{n-k+\frac{1}{2}}\left(1+\frac{k-np}{np}\right)^{k+\frac{1}{2}}}.
    \end{align*}
\end{proof}
}

\begin{prop}
    \label{prop:Nd_bound}
    Let $n$ be a positive integer and $d\neq 1$ a divisor of $n$, and let $q\in (0,1)$. Let $M(q,n)=\max_{0\leq k \leq n} \phi_q(k,n)$. Then $P_q(d,n)\leq M(q,\frac{n}{d})^{\varphi(d)}$, where $\varphi$ is Euler's totient function. If $d$ is prime, then 
    $P_q(d,n)\geq M(q,\frac{n}{d})^d$.
\end{prop}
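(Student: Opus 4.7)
My plan is to deduce both bounds directly from equation (\ref{eq:prob}), which expresses
\begin{align*}
P_q(d,n) = \sum_{s \in \mathcal{S}_d} \prod_{j=0}^{d-1} \phi_q(s_j, n/d),
\end{align*}
where $\mathcal{S}_d \subseteq [0, n/d]^d$ is the set of coefficient vectors of elements of the ideal $M_d = \Phi_d(x) R_d$. The key structural fact I will use is that $M_d$ has rank $d - \varphi(d)$, since $R_d / M_d \cong \mathbb{Z}[x]/\Phi_d(x)$ has rank $\varphi(d)$.

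For the upper bound, I will parameterize $\mathcal{S}_d$ by $d - \varphi(d)$ ``free'' coordinates. Since $\Phi_d(x)$ is monic of degree $\varphi(d)$, reducing $\overline{f}(x) = \sum_{j=0}^{d-1} s_j x^j$ modulo $\Phi_d(x)$ produces exactly $\varphi(d)$ linear equations that express $s_0, \ldots, s_{\varphi(d)-1}$ as integer linear combinations of $s_{\varphi(d)}, \ldots, s_{d-1}$. Bounding the $\varphi(d)$ factors $\phi_q(s_j, n/d)$ with $j < \varphi(d)$ by $M(q, n/d)$ contributes an overall factor of $M(q, n/d)^{\varphi(d)}$, and then summing the remaining product over all $(s_{\varphi(d)}, \ldots, s_{d-1}) \in [0, n/d]^{d-\varphi(d)}$ — an overestimate, since it ignores the constraint that the determined $s_i$ also lie in $[0, n/d]$ — factors as $d-\varphi(d)$ copies of $\sum_{k=0}^{n/d} \phi_q(k, n/d) = 1$, giving the claimed inequality.

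For the lower bound when $d$ is prime, equation (\ref{eq:prime}) reduces the sum to $\sum_{k=0}^{n/d} \phi_q(k, n/d)^d$, and keeping only the single largest term yields $P_q(d,n) \geq M(q, n/d)^d$ immediately.

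The main obstacle is the parameterization argument used in the upper bound: one needs $\varphi(d)$ of the coordinates to be genuinely integer-linear functions of the remaining $d - \varphi(d)$, not merely $\mathbb{Q}$-linear. Monicity of $\Phi_d(x)$ is precisely what makes this work, because it guarantees that the leading coefficient of the reduction of each $x^j$ (for $j \geq \varphi(d)$) modulo $\Phi_d(x)$ is an integer — and, correspondingly, that the first $\varphi(d)$ coordinates are solved for with coefficient $\pm 1$. Once this is in place, both bounds are routine.
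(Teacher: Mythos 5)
Your proof is correct and follows essentially the same route as the paper's: parameterize $\mathcal{S}_d$ by $d-\varphi(d)$ free coordinates, bound the remaining $\varphi(d)$ factors by $M(q,n/d)$, and sum the free part to $1$; the lower bound for prime $d$ is the same one-term truncation of (\ref{eq:prime}). The only (immaterial) difference is that you take the last $d-\varphi(d)$ coordinates as free via monicity of $\Phi_d$, whereas the paper takes the first $d-\varphi(d)$ via the Hermite normal form of the lattice $M_d$, using that $\Phi_d$ has constant coefficient $1$.
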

\begin{proof}
Note that $\{x^j \Phi_d(x):j=0,1,\dots,d-\varphi(d)-1\}$ constitutes a $\Z$-basis of $M_d=\Phi_d(x)R_d \subseteq R_d\cong \Z^d$. Therefore, since $\Phi_d(x)$ has constant coefficient one, $M_d$ (viewed as a sublattice of $\mathbb{Z}^d$) has a basis (the Hermite normal form) generated by the rows of the matrix $B=\left(I_{d-\varphi(d)}\mid A\right)$ for some matrix $A\in \Z^{(d-\varphi(d))\times\varphi(d)}$ where $I_{d-\varphi(d)}$ is the identity matrix. Each element $(s_0^{(d)},s_1^{(d)},\dots,s_{d-1}^{(d)})\in \mathcal{S}_d$ (where $\mathcal{S}_d$ is the subset of $M_d$ consisting of elements with coordinates in $[0,\frac{n}{d}]$) can be represented as
\begin{align*}
    (s_0^{(d)},s_1^{(d)},\dots,s_{d-1}^{(d)})=\sum_{i=0}^{d-\varphi(d)-1} z_i b_i,
\end{align*}
where $z_i\in \Z$ and $b_i$ denotes the $i$:th row of $B$. Since $0\leq s_j^{(d)}\leq \frac{n}{d}$, it is necessary (but not sufficient) that $0\leq z_i\leq \frac{n}{d}$ for all $i=0,1,\dots, d-\varphi(d)-1$. Thus, an upper bound on $P_q(d,n)$ is given by
\begin{align*}
P_q(d,n)&=\sum_{(s_0^{(d)},\dots,s_{d-1}^{(d)})\in \mathcal{S}_d}\prod_{j=0}^{d-1} \phi_q(s_j^{(d)},n/d)\\
    &\leq \sum_{0\leq z_0,\dots, z_{d-\varphi(d)-1}\leq \frac{n}{d}} M(q,n/d)^{\varphi(d)} \prod_{i=0}^{d-\varphi(d)-1} \phi_q(z_i,n/d) \\
    &= M(q,n/d)^{\varphi(d)}\left(\sum_{z=0}^{\frac{n}{d}}\phi_q(z,n/d)\right)^{d-\varphi(d)}= M(q,n/d)^{\varphi(d)}.
\end{align*}
The lower bound for $d$ prime follows from (\ref{eq:prime}) and 
\begin{align*}
    P_q(d,n)&=\sum_{k=0}^{\frac{n}{d}}\phi_q(k,n/d)^d\geq M(q,n/d)^d.
\end{align*}
\end{proof}

Note that the above lemma shows that $M(q,n/d)^{d} \leq P_q(d,n)\leq M(q,n/d)^{d-1}$ when $d$ is prime.

\section{Estimation of sums of powers of $\phi_q(k,n)$}
\label{sec:mq_bounds}

Now that we have developed expressions for the probabilities $P_q(d,n)$ and obtained bounds on them in terms of $M(q,n)$, we need to estimate $M(q,n)$ as well as a sum of the form $\sum_{k=0}^n \phi_q(k,n)^m$ where $m$ might depend on $n$. We begin with the former task. The following lemma will become useful.

\begin{lem}[de Moivre--Laplace]
    \label{lem:deMoivre}
    Let $q\in (0,1)$ and let $c>0$ be a real number. Then for all $k\in\mathbb{Z}$ with $|k-nq|\leq c\sqrt{n}$,
    \begin{align*}
        \phi_q(k,n)=\frac{1}{\sqrt{2\pi n q(1-q)}}e^{-\frac{(k-nq)^2}{2nq(1-q)}}\left(1+O\left(\frac{1}{\sqrt{n}}\right)\right),
    \end{align*}
 and for all $k\in\mathbb{Z}$ with $|k-nq|\leq c$,
 \begin{align*}
        \phi_q(k,n)=\frac{1}{\sqrt{2\pi n q(1-q)}}\left(1+O\left(\frac{1}{n}\right)\right),
    \end{align*}
    as $n\to\infty$.
\end{lem}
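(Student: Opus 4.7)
The plan is to reduce everything to Stirling's formula followed by a Taylor expansion, which is the standard route to a local central limit theorem. Starting from $\phi_q(k,n)=\binom{n}{k}q^{k}(1-q)^{n-k}$, I would apply Stirling's approximation $m!=\sqrt{2\pi m}\,(m/e)^{m}e^{\epsilon_m}$ with $\epsilon_m=O(1/m)$ to each of $n!$, $k!$, and $(n-k)!$. Since $q\in(0,1)$ is fixed and the hypothesis $|k-nq|\leq c\sqrt{n}$ forces $k,n-k=\Theta(n)$, the combined Stirling error is $O(1/n)$, and after cancellation one obtains
\[
\phi_q(k,n)=\sqrt{\frac{n}{2\pi k(n-k)}}\left(\frac{nq}{k}\right)^{\!k}\left(\frac{n(1-q)}{n-k}\right)^{\!n-k}\bigl(1+O(1/n)\bigr).
\]

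Next, I would set $\delta=k-nq$, so that $n-k=n(1-q)-\delta$. The prefactor simplifies as $\sqrt{n/(2\pi k(n-k))}=\frac{1}{\sqrt{2\pi nq(1-q)}}(1+O(\delta/n))$. For the two power factors, I would take logarithms and apply the expansion $\log(1+u)=u-u^{2}/2+O(u^{3})$. A direct computation, using $k=nq+\delta$ and $n-k=n(1-q)-\delta$, shows that the linear-in-$\delta$ contributions cancel between the two pieces and the second-order terms combine to yield exactly the Gaussian exponent:
\[
-k\log\!\left(1+\tfrac{\delta}{nq}\right)-(n-k)\log\!\left(1-\tfrac{\delta}{n(1-q)}\right)=-\frac{\delta^{2}}{2nq(1-q)}+O\!\left(\frac{\delta^{3}}{n^{2}}\right).
\]

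Under $|\delta|\leq c\sqrt{n}$ the cubic remainder is $O(1/\sqrt{n})$ and the prefactor error $O(\delta/n)$ is also $O(1/\sqrt{n})$; exponentiating and combining with the $O(1/n)$ Stirling error gives the first claim. For the sharper estimate in the regime $|\delta|\leq c$, one observes that $\delta^{2}/(2nq(1-q))=O(1/n)$, so $\exp(-\delta^{2}/(2nq(1-q)))=1+O(1/n)$; the cubic remainder sharpens to $O(1/n^{2})$ and the prefactor error to $O(1/n)$, yielding $\phi_q(k,n)=\frac{1}{\sqrt{2\pi nq(1-q)}}(1+O(1/n))$, as claimed.

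The only real technical point is verifying that the linear terms in the Taylor expansion cancel exactly and that the cubic remainder is genuinely $O(\delta^{3}/n^{2})$ uniformly over the allowed range of $k$; once this bookkeeping is in place, no further ideas beyond Stirling and Taylor are needed, and the second statement is strictly an improvement of the first under the stronger constraint on $|\delta|$.
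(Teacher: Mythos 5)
Your argument is correct and is exactly the standard Stirling-plus-Taylor derivation of the local limit theorem that the paper outsources to its citation (\cite[Theorem 2]{balazs2014stirling}); the cancellation of the linear terms and the $O(\delta^3/n^2)$ remainder both check out, and the sharpening to $O(1/n)$ when $|k-nq|\leq c$ is precisely the ``appropriate modification'' the paper alludes to. In effect you have supplied the proof the paper only references, rather than taken a different route.
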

\begin{proof}
    See \cite[Theorem 2]{balazs2014stirling} for the first part. The second part follows from the proof of \cite[Theorem 2]{balazs2014stirling} after appropriate modifications.
\end{proof}

The maximum of the binomial distribution with parameters $n$ and $q\in (0,1)$ is attained at $k=\lfloor (n+1)q\rfloor$, and the value of this maximum is asymptotic to $M(q,n)\sim\frac{1}{\sqrt{2\pi n q(1-q)}}$ as $n\to \infty$, by Lemma \ref{lem:deMoivre}. The second part of Lemma \ref{lem:deMoivre} immediately gives the next lemma, which shows that if $m:\mathbb{N}\to\mathbb{N}$ is a function that does not grow too rapidly, then the above approximation remains valid after raising $M(q,n)$ to the $m(n)$:th power.

\begin{lem}
    \label{lem:mq_bounds}
    Let $q\in (0,1)$. Let $m:\mathbb{N}\to\mathbb{N}$ be a function such that $m(n)=O(n)$. Then $M(q,n)^{m(n)}=\Theta\left((2\pi n q(1-q))^{-\frac{m(n)}{2}}\right)$ as $n\to\infty$.
\end{lem}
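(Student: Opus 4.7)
The plan is to reduce the lemma to the second, tighter, part of the de~Moivre--Laplace lemma (Lemma~\ref{lem:deMoivre}), applied at the mode of the binomial distribution, and then carefully control what happens when the resulting multiplicative error $1+O(1/n)$ is raised to the power $m(n)$.

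First I would locate the maximum: the binomial pmf $k\mapsto \phi_q(k,n)$ attains its maximum at $k^*=\lfloor (n+1)q\rfloor$. Since $|k^*-nq|\leq |(n+1)q-nq|+1 = q+1$, this value of $k^*$ lies in the regime $|k-nq|\leq c$ for the constant $c=2$ (say), for every $n$. The second part of Lemma~\ref{lem:deMoivre} therefore applies and yields
\begin{align*}
M(q,n) \;=\; \phi_q(k^*,n) \;=\; \frac{1}{\sqrt{2\pi n q(1-q)}}\left(1+O\!\left(\tfrac{1}{n}\right)\right).
\end{align*}

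Next I would raise both sides to the $m(n)$-th power. This gives
\begin{align*}
M(q,n)^{m(n)} \;=\; \bigl(2\pi n q(1-q)\bigr)^{-m(n)/2}\cdot\left(1+O\!\left(\tfrac{1}{n}\right)\right)^{m(n)},
\end{align*}
so the whole statement comes down to showing the factor $\bigl(1+O(1/n)\bigr)^{m(n)}$ is $\Theta(1)$ as $n\to\infty$. Taking logarithms, write $1+O(1/n) = 1+\varepsilon_n$ with $|\varepsilon_n|\leq C/n$ for some constant $C>0$ and $n$ large enough that $|\varepsilon_n|<1/2$. Then $\log(1+\varepsilon_n) = \varepsilon_n + O(\varepsilon_n^2)$, hence
\begin{align*}
m(n)\log(1+\varepsilon_n) \;=\; m(n)\,O\!\left(\tfrac{1}{n}\right) \;=\; O\!\left(\tfrac{m(n)}{n}\right) \;=\; O(1),
\end{align*}
where the last equality uses the hypothesis $m(n)=O(n)$. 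Exponentiating shows that $\bigl(1+O(1/n)\bigr)^{m(n)}$ is bounded both above and away from zero, i.e. $\Theta(1)$, which completes the proof.

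There is no serious obstacle here; the only thing to be a bit careful about is that the $O(1/n)$ error in $M(q,n)$ can a priori be either positive or negative, so one needs the two-sided estimate $|\varepsilon_n|\leq C/n$ (not merely $\varepsilon_n \leq C/n$) in order to get both a lower and an upper bound on the exponentiated factor, and thus the full $\Theta$ statement rather than just an $O$ statement. This is automatic from the way the de~Moivre--Laplace error term is stated.
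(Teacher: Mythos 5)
Your proposal is correct and follows essentially the same route as the paper: both apply the second (sharper) part of Lemma \ref{lem:deMoivre} at the mode $k^*=\lfloor (n+1)q\rfloor$ to get $M(q,n)=(2\pi nq(1-q))^{-1/2}(1+O(1/n))$, and then show that the two-sided error factor raised to the power $m(n)=O(n)$ stays bounded above and below by constants. The paper bounds $(1\pm C_0/n)^{m(n)}$ directly by $(1\pm C_0/n)^{C_1 n}$ and compares with $e^{\pm C_0C_1}$, whereas you take logarithms, but this is only a cosmetic difference.
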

\begin{proof}
    By Lemma \ref{lem:deMoivre}, there exist constants $C_0,N_0>0$ with $\frac{C_0}{N_0}<1$, and where $C_0$, $N_0$ only depend on $q$, such that $\frac{1}{\sqrt{2\pi nq(1-q)}}(1-\frac{C_0}{n})\leq M(q,n)\leq \frac{1}{\sqrt{2\pi nq(1-q)}}(1+\frac{C_0}{n})$ for all $n>N_0$. Since $m(n)=O(n)$, there exist constants $C_1,N_1>N_0$ such that
    \begin{align*}
        M(q,n)^{m(n)} &\leq (2\pi nq(1-q))^{-\frac{m(n)}{2}}\left(1+\frac{C_0}{n}\right)^{m(n)}\\
        &\leq (2\pi nq(1-q))^{-\frac{m(n)}{2}}\left(1+\frac{C_0}{n}\right)^{C_1 n}\leq (2\pi nq(1-q))^{-\frac{m(n)}{2}}e^{C_0 C_1}
    \end{align*}
    and
    \begin{align*}
        M(q,n)^{m(n)}&\geq (2\pi nq(1-q))^{-\frac{m(n)}{2}}\left(1-\frac{C_0}{n}\right)^{m(n)}\\
        &\geq (2\pi n q(1-q))^{-\frac{m(n)}{2}} \left(1-\frac{C_0}{n}\right)^{C_1 n}\geq (2\pi n q(1-q))^{-\frac{m(n)}{2}} \cdot \frac{1}{2}e^{-C_0 C_1}
    \end{align*}
    for all $n>N_1$.
\end{proof}
Now we turn to the next task of this section. The implicit constants in the next proposition only depend upon $q$.

\begin{prop}
\label{prop:asymptotic2}
    Let $q\in (0,1)$ be fixed. Let $m:\mathbb{N}\to\mathbb{N}$ be a function such that $m(n)=O(\sqrt{n})$. Then
    \begin{align*}
         \sum_{k=0}^n \phi_q(k,n)^{m(n)} =\Theta\left( \frac{1}{(2\pi q(1-q)n)^{\frac{m(n)-1}{2}}\sqrt{m(n)}}\right).
    \end{align*}
    If furthermore $m(n)=o(\sqrt{n})$, then
    \begin{align*}
         \sum_{k=0}^n \phi_q(k,n)^{m(n)} \sim \frac{1}{(2\pi q(1-q)n)^{\frac{m(n)-1}{2}}\sqrt{m(n)}}.
    \end{align*}
\end{prop}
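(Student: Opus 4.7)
The strategy is to apply the de Moivre--Laplace local approximation (Lemma \ref{lem:deMoivre}) in a central region, convert the resulting discrete Gaussian sum into a Gaussian integral, and control the tail via unimodality. Set $\sigma^2:=nq(1-q)$ and let $\tau_n^2:=\sigma^2/m(n)$ be the effective variance of the peak of $\phi_q(\cdot,n)^{m(n)}$. For a cutoff $c>0$, Lemma \ref{lem:deMoivre} gives uniformly on $|k-nq|\le c\sqrt{n}$,
\begin{align*}
\phi_q(k,n)^{m(n)}=(2\pi\sigma^2)^{-m(n)/2}\,e^{-(k-nq)^2/(2\tau_n^2)}\bigl(1+O(1/\sqrt{n})\bigr)^{m(n)}.
\end{align*}
The factor $\bigl(1+O(1/\sqrt{n})\bigr)^{m(n)}$ is $\Theta(1)$ when $m(n)=O(\sqrt{n})$ and sharpens to $1+o(1)$ when $m(n)=o(\sqrt{n})$; this is the precise source of the dichotomy in the two conclusions.

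Next I would estimate the Gaussian Riemann sum $\sum_{|k-nq|\le c\sqrt{n}}e^{-(k-nq)^2/(2\tau_n^2)}$. Because $\tau_n\to\infty$ (as $m(n)\le O(\sqrt{n})=o(n)$), a standard Euler--Maclaurin estimate yields
\begin{align*}
\sum_{k\in\Z}e^{-(k-nq)^2/(2\tau_n^2)}=\sqrt{2\pi}\,\tau_n+O(1)\sim\sqrt{2\pi}\,\tau_n,
\end{align*}
and truncation to $|k-nq|\le c\sqrt{n}$ loses at most the complementary Gaussian mass (absorbed into the tail bound below). Combining with the previous step, the central contribution is asymptotic to $(2\pi\sigma^2)^{-m(n)/2}\sqrt{2\pi}\,\tau_n=(2\pi q(1-q)n)^{-(m(n)-1)/2}/\sqrt{m(n)}=:B_n$ in the $\sim$-regime and $\Theta(B_n)$ in the weaker regime.

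For the tail $|k-nq|>c\sqrt{n}$, unimodality of $\phi_q(\cdot,n)$ together with Hoeffding's inequality $\mathbb{P}(|X-nq|>c\sqrt{n})\le 2e^{-2c^2}$ for $X\sim\mathrm{Bin}(n,q)$ gives
\begin{align*}
\sum_{|k-nq|>c\sqrt{n}}\phi_q(k,n)^{m(n)}\le M(q,n)^{m(n)-1}\cdot 2e^{-2c^2}=O\bigl(\sqrt{m(n)}\,e^{-2c^2}\bigr)\cdot B_n,
\end{align*}
where the last equality uses Lemma \ref{lem:mq_bounds}. This is $o(B_n)$ for suitably growing $c$, completing the proof after combining with the central estimate.

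The main technical obstacle is that Lemma \ref{lem:deMoivre} is stated for a fixed $c>0$, whereas the tail bound requires $c=c(n)\to\infty$ (at least like $\sqrt{\log n}$ to beat the $\sqrt{m(n)}=O(n^{1/4})$ prefactor). I would resolve this either by extending Lemma \ref{lem:deMoivre} to allow $c=o(n^{1/6})$, which is standard local-CLT territory extractable from the proof in \cite{balazs2014stirling}, or by splitting into two subcases: when $m(n)\to\infty$, $\tau_n=o(\sqrt{n})$ and a fixed large $c$ already makes both the truncated-Gaussian error and the Hoeffding tail negligible; when $m(n)$ is bounded, the claim reduces to the classical multivariate local CLT applied to the lattice random vector $(X_1-X_2,\dots,X_{m(n)-1}-X_{m(n)})$ of consecutive differences of iid $\mathrm{Bin}(n,q)$ variables, whose covariance matrix has determinant $m(n)\sigma^{2(m(n)-1)}$, producing the correct constant $1/\sqrt{m(n)}$.
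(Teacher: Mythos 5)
Your central estimate (de Moivre--Laplace on $|k-nq|\le c\sqrt{n}$, then Riemann sum $\to$ Gaussian integral with $O(1)$ error) matches the paper's. The tail handling is where the proposal diverges, and it has a genuine gap. With the bound you wrote,
\begin{align*}
\sum_{|k-nq|>c\sqrt{n}}\phi_q(k,n)^{m(n)}\le M(q,n)^{m(n)-1}\cdot 2e^{-2c^2}=\Theta\!\left(\sqrt{m(n)}\,e^{-2c^2}\right)\cdot B_n,
\end{align*}
a \emph{fixed} $c$ does not make the tail negligible when $m(n)\to\infty$: the factor $\sqrt{m(n)}\,e^{-2c^2}$ blows up. So your fallback claim that ``when $m(n)\to\infty$ ... a fixed large $c$ already makes the Hoeffding tail negligible'' is false under this bound, and your first fix (growing $c$) collides with the fixed-$c$ hypothesis in Lemma~\ref{lem:deMoivre} exactly as you noticed. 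The bound itself is the problem: you discard all the decay that raising $\phi_q$ to the $m(n)$-th power buys you. Using unimodality to majorize $\phi_q(k,n)$ on the tail by its boundary value $\phi_q(k_0,n)\approx M(q,n)\,e^{-c^2/(2q(1-q))}$, rather than by $M(q,n)$, gives the extra factor $e^{-(m(n)-1)c^2/(2q(1-q))}$, and then $\sqrt{m(n)}\,e^{-(m(n)-1)c^2/(2q(1-q))}$ is uniformly small for large fixed $c$; that would rescue the argument without touching Lemma~\ref{lem:deMoivre}.

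The paper avoids the issue altogether with a simpler monotonicity trick: since $\phi_q(k,n)\ge\phi_q(k',n)$ for $k\in I_{c,n}$, $k'\notin I_{c,n}$, the ratio of central to total mass can only increase under taking $m$-th powers,
\begin{align*}
\frac{\sum_{k\in I_{c,n}}\phi_q(k,n)^{m(n)}}{\sum_{k=0}^{n}\phi_q(k,n)^{m(n)}}\ \ge\ \frac{\sum_{k\in I_{c,n}}\phi_q(k,n)}{\sum_{k=0}^{n}\phi_q(k,n)}\ \ge\ 1-O(\epsilon),
\end{align*}
which holds for a fixed $c=c(\epsilon)$ and dispenses with Hoeffding, growing cutoffs, and case splits on $m(n)$ entirely. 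Your alternative route for bounded $m(n)$ via a multivariate local CLT for consecutive differences would also produce the right answer, but it is a heavier tool than the one-dimensional argument requires and still leaves you needing a separate uniform argument to stitch the bounded and unbounded $m(n)$ regimes together.
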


\begin{proof}
    For a proof of the asymptotic formula in the special case $q=\frac{1}{2}$ and $m(n)$ fixed, see \cite{farmer2004asymptotic}. Let $0<\epsilon<1$ be given, and let $c>0$ be a real number such that $\frac{1}{\sqrt{2\pi}}\int_{-c}^c e^{-\frac{1}{2}t^2}dt\geq 1-\epsilon$. Let $I_{c,n}=\{k\in\Z:|k-nq|\leq c\sqrt{nq(1-q)}\}$. For ease of notation, let $K_q=2\pi q(1-q)$. By Lemma \ref{lem:deMoivre}, there exist constants $C_0,N_0>0$ (with $\frac{C_0}{\sqrt{N_0}}<1$), such that for all $n>N_0$ and $k\in I_{c,n}$,

    \begin{align*}
        \left(1-\frac{C_0}{\sqrt{n}}\right)\frac{1}{\sqrt{K_q n}}e^{-\frac{(k-nq)^2}{2nq(1-q)}} &\leq  \phi_q(k,n) \leq \left(1+\frac{C_0}{\sqrt{n}}\right) \frac{1}{\sqrt{K_q n}}e^{-\frac{(k-nq)^2}{2nq(1-q)}}.
    \end{align*}

   Thus, for these values of $n$ and $k$,

   \begin{align*}
        &\left(1-\frac{C_0}{\sqrt{n}}\right)^{m(n)}\frac{1}{(K_q n)^{\frac{m(n)}{2}}}e^{-\frac{m(n)(k-nq)^2}{2nq(1-q)}} \\
        &\leq \phi_q(k,n)^{m(n)}\leq \left(1+\frac{C_0}{\sqrt{n}}\right)^{m(n)} \frac{1}{(K_qn)^{\frac{m(n)}{2}}}e^{-\frac{m(n)(k-nq)^2}{2nq(1-q)}}.
    \end{align*}

   Since $m(n)=O(\sqrt{n})$, analogously to the proof of Lemma \ref{lem:mq_bounds}, there exist positive constants $C_l,C_u$ and $N_1>N_0$ such that for all $n>N_1$ and $k\in I_{c,n}$,

  \begin{align*}
        \frac{C_l}{(K_q n)^{\frac{m(n)}{2}}}e^{-\frac{m(n)(k-nq)^2}{2nq(1-q)}} \leq  \phi_q(k,n)^{m(n)} \leq \frac{C_u}{(K_q n)^{\frac{m(n)}{2}}}e^{-\frac{m(n)(k-nq)^2}{2nq(1-q)}}.
    \end{align*}
  
   If $m(n)=o(\sqrt{n})$, then $\left(1\pm\frac{C_0}{\sqrt{n}}\right)^{m(n)}\to1$ as $n\to \infty$, so we may take $C_u=1+\epsilon$ and $C_l=1-\epsilon$. Next we claim that 
   
   \begin{align*}
       \sum_{k\in I_{c,n}} e^{-\frac{m(n)(k-nq)^2}{2nq(1-q)}}&=(1+o(1))\int_{-c\sqrt{nq(1-q)}}^{c\sqrt{nq(1-q)}} e^{-\frac{m(n)t^2}{2nq(1-q)}}dt\\
       &=(1+o(1))\sqrt{nq(1-q)}\int_{-c}^{c} e^{-\frac{m(n) t^2}{2}} dt
   \end{align*}
   where the latter equality comes from the substitution $t=\sqrt{nq(1-q)} u$. Consider the function $f(t)=e^{-at^2}$, where $a>0$. The integral $\int_{-b}^{b}f(t)dt$ can be approximated by rectangles below and above, which gives the bound
   \begin{align*}
       \left|\sum_{t=-b}^{b} f(t)-\int_{-b}^{b}f(t)dt\right|<f(0)=1
   \end{align*}
    for all $b>0$, where the summation runs over all integers in the interval $[-b,b]$. Therefore, $|\sum_{k\in I_{c,n}} e^{-\frac{m(n)(k-nq)^2}{2nq(1-q)}}-\int_{-c\sqrt{nq(1-q)}}^{c\sqrt{nq(1-q)}} e^{-\frac{m(n)t^2}{2nq(1-q)}}dt|<1$ for all $n$. Note that $\sqrt{\frac{m(n)}{2\pi}}\int_{-c}^{c} e^{-\frac{m(n)}{2}t^2}dt\geq \frac{1}{\sqrt{2\pi}}\int_{-c}^{c}e^{-\frac{1}{2}t^2}dt\geq  1-\epsilon$. Therefore, 
    \begin{align*}
      \sqrt{nq(1-q)}\int_{-c}^{c}e^{-\frac{m(n)}{2}t^2}dt\geq \sqrt{nq(1-q)}\sqrt{\frac{2\pi}{m(n)}}(1-\epsilon)\to\infty   
    \end{align*}
    as $n\to\infty$. This proves the claim.  Thus, we can find a constant $N_2>N_1$ such that for all $n>N_2$,

    \begin{align*}
        &\frac{(1-\epsilon)C_l}{(K_q n)^{\frac{m(n)}{2}}}\sqrt{nq(1-q)}\int_{-c}^{c} e^{-\frac{m(n) t^2}{2}} dt\\
        &\leq  \sum_{k\in I_{c,n}} \phi_q(k,n)^{m(n)}\leq \frac{(1+\epsilon)C_u}{(K_qn)^{\frac{m(n)}{2}}}\sqrt{nq(1-q)}\int_{-c}^{c} e^{-\frac{m(n) t^2}{2}} dt.
    \end{align*}

   In particular, all of the above holds when $m(n)\equiv 1$ (with $C_l=1-\epsilon$), so we may find a constant $N_3>N_2$ such that $\sum_{k\in I_{c,n}}\phi_q(k,n)\geq\frac{(1-\epsilon)^2}{\sqrt{2\pi}}\int_{-c}^{c}e^{-\frac{t^2}{2}}dt$ for all $n>N_3$. Using the fact that $\phi_q(k,n)\geq \phi_q(k',n)$ for all $k\in I_{c,n}$ and $k'\in [0,n]\setminus I_{c,n}$ we obtain
   \begin{align*}
       1&\geq \frac{\sum_{k\in I_{c,n}}\phi_q(k,n)^{m(n)}}{\sum_{k=0}^{n}\phi_q(k,n)^{m(n)}}\geq \frac{\sum_{k\in I_{c,n}}\phi_q(k,n)}{\sum_{k=0}^{n}\phi_q(k,n)}=\sum_{k\in I_{c,n}}\phi_q(k,n)\\
       &\geq (1-\epsilon)^2\frac{1}{\sqrt{2\pi}}\int_{-c}^{c}e^{-\frac{t^2}{2}}dt\geq (1-\epsilon)^3,
   \end{align*}
    for all $n>N_3$. For these values of $n$,
    
    \begin{align*}
        &\sum_{k=0}^{n}\phi_q(k,n)^{m(n)} \leq \frac{1}{(1-\epsilon)^3} \sum_{k\in I_{c,n}}\phi_q(k,n)^{m(n)}\\
        &\leq \frac{(1+\epsilon)C_u}{(1-\epsilon)^3 (K_q n)^{\frac{m(n)}{2}}}\sqrt{nq(1-q)}\int_{-\infty}^{\infty} e^{-\frac{m(n) t^2}{2}} dt.
    \end{align*}
    
Using again that $\sqrt{\frac{m(n)}{2\pi}}\int_{-c}^{c} e^{-\frac{m(n)}{2}t^2}dt\geq \frac{1}{\sqrt{2\pi}}\int_{-c}^{c}e^{-\frac{1}{2}t^2}dt\geq  1-\epsilon$ for all $n$, we get for $n>N_3$,
\begin{align*}
    &\sum_{k=0}^{n}\phi_q(k,n)^{m(n)} \geq \sum_{k\in I_{c,n}}\phi_q(k,n)^{m(n)}\\
    &\geq\frac{(1-\epsilon)C_l}{(K_q n)^{\frac{m(n)}{2}}}\sqrt{nq(1-q)}\int_{-c}^{c} e^{-\frac{m(n) t^2}{2}} dt\\
    &\geq\frac{(1-\epsilon)^2C_l}{(K_q n)^{\frac{m(n)}{2}}}\sqrt{nq(1-q)}\int_{-\infty}^{\infty} e^{-\frac{m(n) t^2}{2}} dt.
\end{align*}

Finally, noting that $\sqrt{\frac{m(n)}{2\pi}}\int_{-\infty}^{\infty} e^{-\frac{m(n)}{2}t^2}dt=1$ and recalling that $C_l=1-\epsilon$ and $C_u=1+\epsilon$ if $m(n)=o(\sqrt{n})$, the asymptotic formulas follow.

\end{proof}

\mycomment{


We will also need lower and upper bounds on $N(d,n)$, where $d$ is prime, in the proof of Theorem \ref{thm:main}. This means that we need to have a good estimate for a sum of the form $\sum_{a=0}^{n}\binom{n}{a}^{m}$ where $m$ is a function of $n$. This is established in Lemma \ref{lem:asymptotic2}. The next lemma is a variant of the de Moivre-Laplace theorem, which loosely speaking says that the binomial distribution can be approximated by the normal distribution. More precisely, for $0<p<1$ fixed,  $p^k (1-p)^{n-k}\binom{n}{k}\sim \frac{1}{\sqrt{2\pi n p(1-p)}}e^{-\frac{(k-np)^2}{2np(1-p)}}$ as $n\to\infty$ for all $k=O(\sqrt{n})$, with a relative error upper bounded uniformly in $k$ by $O(1/\sqrt{n})$. What seems to be a less known result is that the error estimate can be reduced to $O(1/n)$ when the binomial distribution is symmetric, i.e. $p=\frac{1}{2}$. This reduced error estimate is crucial in establishing Lemma \ref{lem:asymptotic2}. The stronger error bound is implicitly contained in \cite{X}, but for completeness a proof is presented below, using a similar method as in \cite{x}.

\mycomment{
\begin{lem}
    \label{lem:asymptotic}
    Let  $m:\mathbb{Z}_{\geq 1}\to\mathbb{Z}_{\geq1}$ be a function such that $m(n)=O(n)$. Then
    \begin{align*}
    \sum_{a=0}^{n}\binom{n}{a}^{m(n)} &=\Omega\left(2^{m(n) n}\left(\frac{2}{\pi n}\right)^{\frac{m(n)}{2}}\right),\\
         \sum_{a=0}^n \binom{n}{a}^{m(n)} &= O\left(\frac{2^{m(n)n}}{\sqrt{m(n)}}\left(\frac{2}{\pi n}\right)^{\frac{m(n)-1}{2}}\right)
    \end{align*}
    as $n\to\infty$.
\end{lem}
\begin{proof}
    
\end{proof}

\begin{proof}
    For the first part, we have the simple lower bound $\sum_{a=0}^{n}\binom{n}{a}^{m(n)}\geq \binom{n}{\frac{n}{2}}^{m(n)}$ and the asymptotic expression $\binom{n}{\frac{n}{2}}= 2^n\left(\frac{2}{\pi n}\right)^{\frac{1}{2}}(1+O\left(\frac{1}{n}\right))$. Therefore, $\binom{n}{\frac{n}{2}}^{m(n)}=2^{m(n)n} \left(\frac{2}{\pi n}\right)^{\frac{m(n)}{2}}(1+O\left(\frac{1}{n}\right))^{m(n)}=\Theta\left(2^{m(n) n}\left(\frac{2}{\pi n}\right)^{\frac{m(n)}{2}}\right)$. For the second part, by \cite{xx}, we have $\binom{n}{a}\leq   2^n\left(\frac{2}{\pi n}\right)^{\frac{1}{2}}e^{\frac{-2(a-\frac{n}{2})^2+\frac{23}{18}}{n}}$ for all $a\in\{0,1,\dots,n\}$. Thus, since $m(n)=O(n)$,
 \begin{align*}
     \sum_{a=0}^{n}\binom{n}{a}^{m(n)}&\leq 2^{m(n)n}\left(\frac{2}{\pi n}\right)^{\frac{m(n)}{2}}\sum_{a=0}^{n}e^{\frac{-2m(n)(a-\frac{n}{2})^2}{n}}e^{\frac{23m(n)}{18n}}\\&\leq C2^{m(n) n}\left(\frac{2}{\pi n}\right)^{\frac{m(n)}{2}}\sum_{a=-\infty}^{\infty} e^{-\frac{2m(n)(a-\frac{n}{2})^2}{n}}\\
     &\leq C2^{m(n) n}\left(\frac{2}{\pi n}\right)^{\frac{m(n)}{2}}2\int_{-\infty}^{\infty} e^{-\frac{2m(n)(a-\frac{n}{2})^2}{n}}da \\
     &\leq C' \frac{2^{m(n)n}}{\sqrt{m(n)}}\left(\frac{2}{\pi n}\right)^{\frac{m(n)-1}{2}},
 \end{align*}
as desired.
\end{proof}
}

\begin{lem}
    \label{lem:stirling}
    Let $c\geq 0$ be a real number. Then
    \begin{align*}
        \sup_{k\in\mathbb{Z}: |k-\frac{n}{2}|\leq c\sqrt{n/4}}\left|\frac{\frac{1}{2^{n}}\binom{n}{k}}{\frac{1}{\sqrt{\pi n/2}}e^{-\frac{2}{n}(k-\frac{n}{2})^2}}-1\right|=O(1/n)
    \end{align*}
    as $n\to \infty$.
\end{lem}

\begin{proof}
By Stirling's approximation, $n!=\sqrt{2\pi}n^{n+\frac{1}{2}}e^{-n+\epsilon_n}$ for all positive integers $n$, where $\frac{1}{12n+1}<\epsilon_n<\frac{1}{12n}$. Thus,
\begin{align*}
    \frac{1}{2^n}\binom{n}{k} &= \frac{1}{2^n}\frac{n!}{k!(n-k)!}=\frac{1}{2^n}\frac{e^{\epsilon}}{\sqrt{2\pi}}\frac{n^{n+\frac{1}{2}}}{k^{k+\frac{1}{2}}(n-k)^{n-k+\frac{1}{2}}}\\
    &=\frac{e^{\epsilon}}{\sqrt{\pi n/2}}\left(1+\frac{k-n/2}{n/2}\right)^{-k-\frac{1}{2}}\left(1-\frac{k-n/2}{n/2}\right)^{k-n-\frac{1}{2}}
\end{align*}
where $\epsilon=\epsilon_n-\epsilon_k-\epsilon_{n-k}$. Let $x=\frac{k-n/2}{\sqrt{n/4}}$. Then the above can be written as 
\begin{align*}
    \frac{1}{2^n} \binom{n}{k} &= \frac{e^{\epsilon}}{\sqrt{\pi n/2}}\left(1+\frac{x}{\sqrt{n}}\right)^{-\frac{n}{2}- x\sqrt{n/4}-\frac{1}{2}}\left(1-\frac{x}{\sqrt{n}}\right)^{-\frac{n}{2}+x\sqrt{n/4}-\frac{1}{2}}=\frac{e^{\epsilon+g_n(\frac{x}{\sqrt{n}})}}{\sqrt{\pi n/2}}
\end{align*}
where $g_n(y)=-\frac{n}{2}\left((1+y)\log(1+y)+(1-y)\log(1-y)\right)-\frac{1}{2}\log(1-y^2)$, defined for $y$ in a neighbourhood of zero and where $n\in \mathbb{Z}_{\geq 1}$ is a parameter. Taylor-expanding $g_n(y)$ around $y=0$ gives (note that $g_n$ is an even function)
\begin{align*}
    g_n(y)=\frac{1-n}{2!}y^2+\frac{6-2n}{4!}y^4+\frac{g_n^{(6)}(\zeta)}{6!}y^6
\end{align*}
for some $-y\leq \zeta\leq y$. Since $g_n^{(6)}$ is continuous at $y=0$ and $g_n^{(6)}(0)=120-24n$, there exists $y_0>0$ and a positive constant $C$ such that $|g_n^{(6)}(\zeta)|\leq C n$ for all $-y_0 \leq \zeta\leq y_0$ and all $n$. Thus, for $N>0$ large enough so that $\frac{c}{\sqrt{N}}<y_0$ we have for all $x\in [-c,c]$ and all $n>N$
\begin{align*}
  \epsilon+g_n(\frac{x}{\sqrt{n}}) &=\epsilon+\frac{(1-n)}{2!}(\frac{x}{\sqrt{n}})^2+\frac{6-2n}{4!}(\frac{x}{\sqrt{n}})^4+\frac{E_n(x)}{6!}(\frac{x}{\sqrt{n}})^6 \\
  &=\epsilon-\frac{1}{2}x^2+\left(\frac{x^2}{2}-\frac{x^4}{12}\right)\frac{1}{n}+\frac{x^4}{4 n^2}+\frac{E_n(x)}{6!}\frac{x^6}{n^3}
\end{align*}
where $|E_n(x)|\leq C n$ for all $x\in [-c,c]$. We have the bounds
\begin{align*}
    -\frac{1}{2n}\leq-\frac{1}{12n}\left(\frac{1}{\frac{1}{2}+\frac{x}{2\sqrt{n}}}+\frac{1}{\frac{1}{2}-\frac{x}{2\sqrt{n}}}\right)\leq -\epsilon_{k}-\epsilon_{n-k}\leq\epsilon\leq \epsilon_n<\frac{1}{12n}
\end{align*}
for all $n>N'>N$ for some sufficiently large $N'$. Thus,

\begin{align*}
&\sup_{k\in\mathbb{Z}: |k-\frac{n}{2}|\leq c\sqrt{n/4}}\left|\frac{\frac{1}{2^{n}}\binom{n}{k}}{\frac{1}{\sqrt{\pi n/2}}e^{-\frac{2}{n}(k-\frac{n}{2})^2}}-1\right| \leq 
    \sup_{x\in[-c,c]} \left|\frac{\frac{1}{\sqrt{\pi n/2}}e^{\epsilon+g_n(\frac{x}{\sqrt{n}})}}{\frac{1}{\sqrt{\pi n/2}}e^{-\frac{1}{2}x^2} } -1\right|\\
    &= \sup_{x\in [-c,c]} \left| e^{\epsilon+\left(\frac{x^2}{2}-\frac{x^4}{12}\right)\frac{1}{n}+\frac{x^4}{4 n^2}+\frac{E_n(x)}{6!}\frac{x^6}{n^3}}-1\right|=\sup_{x\in [-c,c]} \left|e^{\frac{f_n(x)}{n}}-1\right|
\end{align*}
where $|f_n(x)|\leq C'$ for all $x\in [-c,c]$ and $n>N'$. Since $|e^{t}-1|\leq 2 |t|$ for all $0<|t|<1$, we have
\begin{align*}
    \sup_{x\in [-c,c]} \left|e^{\frac{f_n(x)}{n}}-1\right|&\leq \frac{2C'}{n}
\end{align*}
for all $n>\max\{N',C'\}$, as desired.
\end{proof}

Using the previous lemma, we can derive an asymptotic expression for $\sum_{a=0}^n \binom{n}{a}^{m(n)}$ when $m(n)=o(n)$.

\begin{lem}
\label{lem:asymptotic2}
    Let $m:\mathbb{N}\to\mathbb{N}$ be a function such that $m(n)=O(n)$. Then
    \begin{align*}
         \sum_{a=0}^n \binom{n}{a}^{m(n)} =\Theta\left( \frac{2^{m(n)n}}{\sqrt{m(n)}}\left(\frac{2}{\pi n}\right)^{\frac{m(n)-1}{2}}\right).
    \end{align*}
    If furthermore $m(n)=o(n)$, then
    \begin{align*}
         \sum_{a=0}^n \binom{n}{a}^{m(n)} \sim \frac{2^{m(n)n}}{\sqrt{m(n)}}\left(\frac{2}{\pi n}\right)^{\frac{m(n)-1}{2}}.
    \end{align*}
\end{lem}

\begin{proof}
    For a proof of the asymptotic formula with $m(n)$ fixed, see \cite{farmer2004asymptotic}. Let $\epsilon>0$ be given, and let $c>0$ be a real number such that $\frac{1}{\sqrt{2\pi}}\int_{-c}^c e^{-\frac{1}{2}t^2 dt}\geq 1-\epsilon$. Let $I_{c,n}=\{r\in\{-\frac{n}{2},\dots,\frac{n}{2}\}:-c\sqrt{n/4}\leq r\leq c\sqrt{n/4}\}$. Then by the previous lemma, there exist constants $C_0,N_0>0$ (with $\frac{C_0}{N_0}<1$), such that for all $n>N_0$ and $r\in I_{c,n}$,

   \begin{align*}
        \left(1-\frac{C_0}{n}\right) 2^n\left(\frac{2}{\pi n}\right)^{\frac{1}{2}}e^{-\frac{2}{n}r^2}\leq \binom{n}{\frac{n}{2}+r} \leq \left(1+\frac{C_0}{n}\right) 2^n\left(\frac{2}{\pi n}\right)^{\frac{1}{2}}e^{-\frac{2}{n}r^2}.
   \end{align*}
   Thus, for these values of $n$ and $r$,
    \begin{align*}
        &\left(1-\frac{C_0}{n}\right)^{m(n)} 2^{nm(n)}\left(\frac{2}{\pi n}\right)^{\frac{1}{2}m(n)}e^{-\frac{2m(n)}{n}r^2}\leq \binom{n}{\frac{n}{2}+r}^{m(n)} \\
        &\leq \left(1+\frac{C_0}{n}\right)^{m(n)} 2^{nm(n)}\left(\frac{2}{\pi n}\right)^{\frac{1}{2}m(n)}e^{-\frac{2m(n)}{n}r^2}.
   \end{align*}

   Since $m(n)=O(n)$, there exist constants $C_1,N_1>N_0$ such that for all $n>N_1$,
   \begin{align*}
       \left(1+\frac{C_0}{n}\right)^{m(n)}&\leq \left(1+\frac{C_0}{n}\right)^{C_1 n}\leq e^{C_0 C_1}=C_u, \\
       \left(1-\frac{C_0}{n}\right)^{m(n)}&\geq \left(1-\frac{C_0}{n}\right)^{C_1 n}\geq \frac{1}{2}e^{-C_0 C_1}=C_l. 
   \end{align*}
   If $m(n)=o(n)$, then $\left(1\pm\frac{C_0}{n}\right)^{m(n)}\to1$ as $n\to \infty$, so we may take $C_u=1+\epsilon$ and $C_l=1-\epsilon$. We get
   \begin{align*}
        &C_l 2^{n m(n)}\left(\frac{2}{\pi n}\right)^{\frac{m(n)}{2}}e^{-\frac{2m(n)}{n}r^2}\leq \binom{n}{\frac{n}{2}+r}^{m(n)} \\
        &\leq C_u 2^{n m(n)}\left(\frac{2}{\pi n}\right)^{\frac{m(n)}{2}}e^{-\frac{2 m(n)}{n}r^2},
   \end{align*}
   for all $n>N_1$ and all $r\in I_{c,n}$. Next we claim that $\sum_{r\in I_{c,n}} e^{-\frac{2m(n)}{n}r^2}=(1+O(1))\int_{-c\sqrt{n/4}}^{c\sqrt{n/4}}e^{-\frac{2m(n)}{n}t^2}dt=(1+O(1))\sqrt{\frac{n}{4}}\int_{-c}^{c}e^{-\frac{m(n)}{2}t^2} dt$ and $O(1)$ can be replaced by $o(1)$ if $m(n)=o(n)$. The function $f(t)=e^{-at^2}$, where $a>0$, can be approximated by the integral $\int_{-b}^{b}f(t)dt$ by rectangles below and above, yielding
   \begin{align*}
       \left|\sum_{t=-b}^{b} f(t)-\int_{-b}^{b}f(t)dt\right|<f(0)=1
   \end{align*}
    for all $b>0$ (where the summation runs over all integers in the interval $[-b,b]$). This shows that $|\sum_{r\in I_{c,n}}e^{-\frac{2m(n)}{n}r^2}-\int_{-c\sqrt{n/4}}^{c\sqrt{n/4}}e^{-\frac{2m(n)}{n}t^2}dt|<1$. Note that $\sqrt{\frac{m(n)}{2\pi}}\int_{-c}^{c} e^{-\frac{m(n)}{2}t^2}dt\geq \frac{1}{\sqrt{2\pi}}\int_{-c}^{c}e^{-\frac{1}{2}t^2}dt\geq  1-\epsilon$, since the former is a Gaussian distribution with smaller variance. Therefore, $\int_{-c\sqrt{n/4}}^{c\sqrt{n/4}}e^{-\frac{2m(n)}{n}t^2}dt=\sqrt{\frac{n}{4}}\int_{-c}^{c}e^{-\frac{m(n)}{2}t^2}dt\geq \sqrt{\frac{n}{4}}\sqrt{\frac{2\pi}{m(n)}}(1-\epsilon)$. The right-hand side is bounded below if $m(n)=O(n)$ and tends to $\infty$ if $m(n)=o(n)$. This proves the claim.  Thus, we can find a constant $N_2>N_1$ and positive constants $C_u',C_l'$ (with $C_l'=1-\epsilon$ and $C_u'=1+\epsilon$ if $m(n)=o(n)$) such that for all $n>N_2$,
   \begin{align*}
        &C_lC_l' 2^{n m(n)}\left(\frac{2}{\pi n}\right)^{\frac{m(n)}{2}}\sqrt{\frac{n}{4}}\int_{-c}^{c}e^{-\frac{m(n)}{2}t^2 dt}\leq \sum_{r\in I_{c,n}}\binom{n}{\frac{n}{2}+r}^{m(n)} \\
        &\leq C_uC_u' 2^{n m(n)}\left(\frac{2}{\pi n}\right)^{\frac{m(n)}{2}}\sqrt{\frac{n}{4}}\int_{-c}^{c}e^{-\frac{m(n)}{2}t^2 dt}.
   \end{align*}
   In particular, the above holds when $m(n)\equiv 1$, so using the fact that $2^n=\sum_{r=-\frac{n}{2}}^{\frac{n}{2}}\binom{n}{\frac{n}{2}+r}$ we obtain
   \begin{align*}
       1&\geq \frac{\sum_{r\in I_{c,n}}\binom{n}{\frac{n}{2}+r}^{m(n)}}{\sum_{r=-\frac{n}{2}}^{\frac{n}{2}}\binom{n}{\frac{n}{2}+r}^{m(n)}}\geq \frac{\sum_{r\in I_{c,n}}\binom{n}{\frac{n}{2}+r}}{\sum_{r=-\frac{n}{2}}^{\frac{n}{2}}\binom{n}{\frac{n}{2}+r}}\geq C_lC_l'\frac{1}{\sqrt{2\pi}}\int_{-c}^{c}e^{-\frac{t^2}{2}}dt\\
       &\geq (1-\epsilon)C_lC_l',
   \end{align*}
    for all $n>N_2$. Therefore, for $n>N_2$,
    \begin{align*}
        \sum_{r=-\frac{n}{2}}^{\frac{n}{2}}\binom{n}{\frac{n}{2}+r}^{m(n)}&\leq \frac{1}{(1-\epsilon)C_lC_l'}\sum_{r\in I_{c,n}}\binom{n}{\frac{n}{2}+r}^{m(n)} \\
        &\leq \frac{C_uC_u'}{(1-\epsilon)C_lC_l'} 2^{nm(n)} \left(\frac{2}{\pi n}\right)^{\frac{m(n)}{2}}\sqrt{\frac{n}{4}}\int_{-\infty}^{\infty}e^{-\frac{m(n)}{2}t^2 dt}.
    \end{align*}
Using again that $\sqrt{\frac{m(n)}{2\pi}}\int_{-c}^{c} e^{-\frac{m(n)}{2}t^2}dt\geq \frac{1}{\sqrt{2\pi}}\int_{-c}^{c}e^{-\frac{1}{2}t^2}dt\geq  1-\epsilon$, we get

\begin{align*}
    \sum_{r=-\frac{n}{2}}^{\frac{n}{2}}\binom{n}{\frac{n}{2}+r}^{m(n)}&\geq \sum_{r\in I_{c,n}}\binom{n}{\frac{n}{2}+r}^{m(n)}\geq C_lC_l' 2^{n m(n)}\left(\frac{2}{\pi n}\right)^{\frac{m(n)}{2}}\sqrt{\frac{n}{4}}\int_{-c}^{c}e^{-\frac{m(n)}{2}t^2 dt}\\
    &\geq (1-\epsilon)C_lC_l' 2^{n m(n)}\left(\frac{2}{\pi n}\right)^{\frac{m(n)}{2}}\sqrt{\frac{n}{4}}\int_{-\infty}^{\infty}e^{-\frac{m(n)}{2}t^2 dt}.
\end{align*}
Finally noting that $\sqrt{\frac{m(n)}{2\pi}}\int_{-\infty}^{\infty} e^{-\frac{m(n)}{2}t^2}dt=1$ and recalling that $C_l,C_l'=1-\epsilon$ and $C_u,C_u'=1+\epsilon$ if $m(n)=o(n)$, the asymptotic formulas follow.

\end{proof}

Note that the above lemma implies that if $f,g:\mathbb{N}\to\mathbb{N}$ are functions such that $g(n)=O(f(n))$ and $\lim_{n\to\infty} f(n)=\infty$, then $\sum_{a=0}^{f(n)}\binom{f(n)}{a}^{g(n)}=\Theta\left( \frac{2^{g(n)f(n)}}{\sqrt{g(n)}}\left(\frac{2}{\pi f(n)}\right)^{\frac{g(n)-1}{2}}\right)$. This is the form in which the lemma will be used in the proof of Theorem \ref{thm:main}. 
}

\section{Main theorem}
\label{sec:main}
With the results of the previous sections we can now prove the main theorem of the article (Theorem \ref{thm:main0}). Recall that $p(n)$ denotes the smallest prime divisor of $n>1$. All constants involved in the proof of the main theorem only depend on the parameter $q$.

\begin{proof}
    First note that $\mathcal{P}_{1,n}\subseteq\mathcal{P}_{d,n}$ for all $d\mid n$. Therefore, by the union bound, $ P_q(p(n),n)\leq P_q(n)\leq \sum_{d\mid n,d\neq 1} P_q(d,n)$ for all $n$. By Lemma \ref{lem:exact_probs} (part 1), it suffices to show that $\frac{\sum_{d\mid n,d\neq 1,p(n)} P_q(d,n)}{P_q(p(n),n)}\to 0$ as $n\to \infty$ for composite $n$, so from now on we only consider composite $n$. Let $0<\epsilon,\delta<10^{-3}$ (say). We will need the fact that $\tau(n)<n^{\epsilon}$ and $n^{1-\delta}<\varphi(n)<n$ for all sufficiently large $n$, where $\tau(n)$ is the number of divisors of $n$ and $\varphi(n)$ is Euler's totient function. We will also use the fact that $p(n)\leq \sqrt{n}$.

    The divisors $d\neq 1,p(n)$ of $n$ are partitioned into four sets (some of which might be empty):
    \begin{align*}
        S_0&=\{d\mid n: n^{\frac{1}{2}+\delta}\leq d \leq n\}, \\
        S_1&=\{d\mid n: 10p(n)\leq d<n^{\frac{1}{2}+\delta}\}, \\
        S_2&=\{d\mid n:p(n)<d<10p(n), \text{ $d$ prime, $d\neq 3$}\}, \\
        S_3&=\{d\mid n:p(n)<d<10p(n), \text{ $d$ composite or $d=3$}\}.
    \end{align*}
    Let $K_q=2\pi q(1-q)$. By Proposition \ref{prop:Nd_bound} and Lemma \ref{lem:mq_bounds}, we can find a constant $C>0$ such that $P_q(p(n),n)\geq M(q,n/p(n))^{p(n)}\geq C\left(\frac{K_q n}{p(n)}\right)^{-\frac{p(n)}{2}}$ for all large enough $n$. 
    
    For each $d\in S_0$, it will be enough to use the trivial bound $P_q(d,n) \leq M(q,\frac{n}{d})^{\varphi(d)}\leq m_q^{\varphi(d)}$, where $m_q=\max\{q,1-q\}<1$. Thus, recalling that $|S_j|\leq \tau(n)< n^{\epsilon}$ and $\varphi(d)>n^{\frac{1}{2}+\frac{\delta}{2}}$ for $d\in S_0$ and $n$ large,
    \begin{align*}
        \frac{\sum_{d\in S_0} P_q(d,n)}{P_q(p(n),n)} &\leq C^{-1}\left(\frac{K_q n}{p(n)}\right)^{\frac{p(n)}{2}}\sum_{d\in S_0} m_q^{\varphi(d)} \leq C^{-1} \left(K_q n\right)^{n^{\frac{1}{2}}}n^{\epsilon}m_q^{n^{\frac{1}{2}+\frac{\delta}{2}}}\to 0
    \end{align*}
   as $n\to \infty$.

   Now consider $d\in S_1$. Then, by Lemma \ref{lem:mq_bounds} we can find a constant $C_0>0$ such that $M(q,n/d)\leq C_0(\frac{K_q n}{d})^{-\frac{1}{2}}$. Therefore, $P_q(d,n)\leq M(q,n/d)^{\varphi(d)}\leq \left(\frac{C_0^2 d}{K_q n}\right)^{\frac{\varphi(d)}{2}}=\left(\frac{C_1 d}{n}\right)^{\frac{\varphi(d)}{2}}$, where $C_1=\frac{C_0^2}{K_q}$.  Noting that $\varphi(d)\geq 3p(n)$ for all $d\in S_1$, we get for sufficiently large $n$,
   \begin{align*}
       \frac{\sum_{d\in S_1} P_q(d,n)}{P_q(p(n),n)} &\leq C^{-1} \left(\frac{K_q n}{p(n)}\right)^{\frac{p(n)}{2}}  \sum_{d\in S_1}\left(\frac{C_1 d}{n}\right)^{\frac{\varphi(d)}{2}} \\
       &\leq C^{-1} \left(\frac{K_q n}{p(n)}\right)^{\frac{p(n)}{2}} \sum_{d\in S_1} \left(\frac{C_1 d}{n}\right)^{\frac{3p(n)}{2}}\\
       &= C^{-1}\sum_{d\in S_1}\left(\frac{K_q C_1^3 d^3}{p(n) n^2}\right)^{\frac{p(n)}{2}}\leq C^{-1}n^{\epsilon}\left(\frac{K_qC_1^3 n^{\frac{3}{2}+3\delta}}{n^2}\right)^{\frac{p(n)}{2}}\\
       &= C^{-1}n^{\epsilon}\left(\frac{K_qC_1^3}{n^{\frac{1}{2}-3\delta}}\right)^{\frac{p(n)}{2}}\leq C^{-1}n^{\epsilon}\frac{K_qC_1^3}{n^{\frac{1}{2}-3\delta}} \to 0.
   \end{align*}
   Suppose that $d\in S_2$. In this case $\varphi(d)\leq d\leq 100(\frac{n}{d})$, since $d<10p(n)\leq10\sqrt{n}$. Thus, Lemma \ref{lem:mq_bounds} gives $P_q(d,n)\leq M(q,\frac{n}{d})^{\varphi(d)}\leq C_2\left(\frac{K_q n}{d}\right)^{-\frac{\varphi(d)}{2}}$ for some constant $C_2>0$ and all $d\in S_2$. Since $d$ is a prime different from 3, $\varphi(d)=d-1>p(n)$. Therefore, for sufficiently large $n$,
   \begin{align*}
       \frac{\sum_{d\in S_2} P_q(d,n)}{P_q(p(n),n)} &\leq C^{-1}C_2\sum_{d\in S_2} \left(\frac{K_qn}{p(n)}\right)^{\frac{p(n)}{2}}\left(\frac{d}{K_q n}\right)^{\frac{\varphi(d)}{2}}\\
       &=C^{-1}C_2\sum_{d\in S_2} \left(\frac{d}{p(n)}\right)^{\frac{p(n)}{2}}\left(\frac{d}{K_qn}\right)^{\frac{\varphi(d)-p(n)}{2}}\\
       &=C^{-1}C_2 \sum_{d\in S_2}  \left(1+\frac{d-p(n)}{p(n)}\right)^{\frac{p(n)}{2}}\left(\frac{d}{K_qn}\right)^{\frac{\varphi(d)-p(n)}{2}}\\
       &\leq C^{-1}C_2\sum_{d\in S_2}  e^{\frac{d-p(n)}{2}}\left(\frac{d}{K_qn}\right)^{\frac{\varphi(d)-p(n)}{2}}\\
       &=C^{-1}C_2e^{\frac{1}{2}}\sum_{d\in S_2} \left(\frac{ed}{K_qn}\right)^{\frac{\varphi(d)-p(n)}{2}}\\
       &\leq C^{-1}C_2n^{\epsilon}e^{\frac{1}{2}}\left(\frac{10e}{K_qn^{\frac{1}{2}}}\right)^{\frac{1}{2}}\to 0.
   \end{align*}
  The final case $d\in S_3$ requires a tighter lower bound on $P_q(p(n),n)$. Note that $d\in S_3$ implies  that either $p(n)=2$ and $d=3$, or $d$ is composite so $p(n)^2\leq d\leq 10p(n)$. In either case, $d$ and $p(n)$ are bounded by 100. Therefore, we may use Proposition \ref{prop:asymptotic2}: there is a constant $C_3>0$ such that for $n$ large enough, $P_q(p(n),n)=\sum_{k=0}^{\frac{n}{p(n)}}\phi_q(k,n/p(n))^{p(n)}\geq C_3 p(n)^{-\frac{1}{2}}\left(\frac{K_qn}{p(n)}\right)^{\frac{1-p(n)}{2}}$. Therefore, if $n$ is sufficiently large, then
  \begin{align*}
      \frac{\sum_{d\in S_3}P_q(d,n)}{P_q(p(n),n)}  &\leq C_3^{-1}C_2\sum_{d\in S_3} p(n)^{\frac{1}{2}}\left(\frac{K_qn}{p(n)}\right)^{\frac{p(n)-1}{2}}\left(\frac{d}{K_qn}\right)^{\frac{\varphi(d)}{2}}\\
      &\leq C_4 \sum_{d\in S_3}n^{\frac{(p(n)-1)-\varphi(d)}{2}}\leq \frac{C_5}{n^{\frac{1}{2}}}\to 0,
  \end{align*}
  for some constants $C_4,C_5>0$, where we used that $d$, $p(n)$ and the number of summands are bounded, and that $\varphi(d)>p(n)-1$. This finishes the proof of the theorem.
\end{proof}


We obtain the following asymptotic expression for $P_q^{+}(n)$.

\begin{cor}
    \label{cor:main}
    Let $q\in (0,1)$ be fixed. Then
    \begin{align*}
        P_q^{+}(n) \sim \sum_{k=0}^{\frac{n}{p(n)}} \phi_q(k,n/p(n))^{p(n)},
    \end{align*}
    except if $q=\frac{1}{2}$ and $n$ is even, in which case
    \begin{align*}
    P_q^{+}(n) &\sim \frac{2\sqrt{2}}{\sqrt{\pi n}}
    \end{align*}
    as $n\to\infty$.
\end{cor}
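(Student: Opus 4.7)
The plan is to reduce to Theorem \ref{thm:main0} by separately handling the divisor $d=1$, which is the only one for which $\mathcal{P}_{d,n}^+$ can differ from $\mathcal{P}_{d,n}$. The first step is the observation that for each divisor $d>1$ of $n$, the two events are in fact \emph{identical} and not merely equal in probability: since $f^+(x)=2f(x)-(x^n-1)/(x-1)$ and $\Phi_d(x)\mid(x^n-1)/(x-1)$ for $d>1$, one has $f^+\equiv 2f\pmod{\Phi_d(x)}$ inside the integral domain $\mathbb{Z}[x]/\Phi_d(x)$, where $2$ is not a zero divisor, so $f^+\equiv 0$ iff $f\equiv 0$. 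Consequently $\bigcup_{d\mid n,\,d\neq 1}\mathcal{P}_{d,n}^+=\bigcup_{d\mid n,\,d\neq 1}\mathcal{P}_{d,n}$, and since $P_q(p(n),n)\leq\mathbb{P}[\bigcup_{d>1}\mathcal{P}_{d,n}]\leq P_q(n)$, Theorem \ref{thm:main0} already gives
\[
\mathbb{P}\Bigl[\bigcup_{d\mid n,\,d>1}\mathcal{P}_{d,n}^+\Bigr]\sim\sum_{k=0}^{n/p(n)}\phi_q(k,n/p(n))^{p(n)}.
\]

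Next I would analyze the $d=1$ contribution case by case. If $n$ is odd, then $\mathcal{P}_{1,n}^+=\emptyset$ and the first asymptotic in the corollary follows at once. If $n$ is even with $q\neq\tfrac{1}{2}$, then $P_q^+(1,n)=\phi_q(n/2,n)$ decays exponentially in $n$ by Lemma \ref{lem:deMoivre} (the Gaussian exponent equals $n(\tfrac{1}{2}-q)^2/(2q(1-q))$, which is linear in $n$), whereas Proposition \ref{prop:asymptotic2} gives $\sum_{k=0}^{n/2}\phi_q(k,n/2)^2=\Theta(n^{-1/2})$; hence the $d=1$ contribution is absorbed into the error and the first asymptotic still holds.

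The delicate case is $q=\tfrac{1}{2}$ with $n$ even, where $p(n)=2$ and both contributions are of the same order $\sqrt{2/(\pi n)}$: the refined estimate in Lemma \ref{lem:deMoivre} yields $\phi_{1/2}(n/2,n)\sim\sqrt{2/(\pi n)}$, and Proposition \ref{prop:asymptotic2} (with $m\equiv 2$ and $n\mapsto n/2$) gives $\sum_{k=0}^{n/2}\phi_{1/2}(k,n/2)^2\sim\sqrt{2/(\pi n)}$. By inclusion-exclusion it suffices to show that $\mathbb{P}\bigl[\mathcal{P}_{1,n}^+\cap\bigcup_{d>1}\mathcal{P}_{d,n}^+\bigr]=o(n^{-1/2})$. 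I would bound this by $\mathbb{P}[\mathcal{P}_{1,n}^+\cap\mathcal{P}_{2,n}^+]+\sum_{d\mid n,\,d\geq 3}P_q(d,n)$; the second sum is $o(P_q(p(n),n))=o(n^{-1/2})$ directly from the proof of Theorem \ref{thm:main0} (its $S_0,S_1,S_2,S_3$ ratio estimates apply with $p(n)=2$), while the first requires $f^+(1)=f^+(-1)=0$, i.e.\ that the Bernoulli variables at even- and odd-indexed positions both sum to $n/4$, giving probability $\binom{n/2}{n/4}^2(1/2)^n=O(1/n)$ when $4\mid n$ and zero otherwise. Adding the two dominant contributions then yields $P_{1/2}^+(n)\sim 2\sqrt{2/(\pi n)}=2\sqrt{2}/\sqrt{\pi n}$. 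I expect the main obstacle to be this last case, where assembling the inclusion-exclusion cleanly and verifying that both the intersection and the $d\geq 3$ tail are genuinely of smaller order than the two matching main terms requires some care.
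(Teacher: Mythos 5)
Your proposal is correct and follows essentially the same route as the paper: reduce to Theorem \ref{thm:main0} for the divisors $d>1$ (using that $\mathcal{P}_{d,n}^{+}=\mathcal{P}_{d,n}$ there), then treat the $d=1$ event by inclusion--exclusion, with the case $q=\tfrac{1}{2}$, $n$ even hinging on the intersection bound $\binom{n/2}{n/4}^{2}2^{-n}=O(1/n)$. The one small slip is that Lemma \ref{lem:deMoivre} does not literally apply at $k=n/2$ when $q\neq\tfrac{1}{2}$, since $|k-nq|$ is then of order $n$ rather than $O(\sqrt{n})$; the exponential decay of $\phi_q(n/2,n)$ should instead be obtained directly from $\binom{n}{n/2}q^{n/2}(1-q)^{n/2}\leq\bigl(2\sqrt{q(1-q)}\bigr)^{n}$, as in the paper's Stirling computation.
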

\begin{proof}
    Recall that $P_q^{+}(d,n)=P_q(d,n)$ for all divisors $d\neq 1$ of $n$. Furthermore, $P_q^{+}(1,n)=\binom{n}{n/2}q^{n/2}(1-q)^{n/2}$ if $n$ is even and $0$ if $n$ is odd. Therefore, if $n$ is odd the claim follows immediately from Theorem \ref{thm:main0}. So suppose that $n$ is even. Then 
    \begin{align*}
        &P_q^{+}(1,n)+P_q^{+}(2,n)-\mathbb{P}[\mathcal{P}_{1,n}^+\cap\mathcal{P}_{2,n}^{+}]\leq P_q^{+}(n)\leq \sum_{d\mid n}P_q^{+}(d,n).
    \end{align*}
    By Stirling's approximation, Proposition \ref{prop:asymptotic2}, and a simple calculation,
    \begin{align*}
    P_q^{+}(1,n) &\sim \frac{2^n}{\sqrt{\pi n/2}}q^{n/2}(1-q)^{n/2}=\frac{(2\sqrt{q(1-q)})^n}{\sqrt{\pi n/2}},\\
        P_q^{+}(2,n) &= \sum_{k=0}^{\frac{n}{2}}\phi_q(k,n/2)^2\sim \frac{1}{\sqrt{2\pi q(1-q)n}},\\
        \mathbb{P}[\mathcal{P}_{1,n}^{+}\cap\mathcal{P}_{2,n}^{+}] &=\begin{cases}
            0,&n\not\equiv 0\pmod{4}\\
            \binom{n/2}{n/4}^2 q^{n/2}(1-q)^{n/2},& n\equiv 0\pmod{4}
        \end{cases}\\
        &\sim\begin{cases}
            0,&n\not\equiv 0\pmod{4}\\
            \frac{(2\sqrt{q(1-q)})^n}{\pi n/4},& n\equiv 0\pmod{4}.
        \end{cases}
    \end{align*}
   If $q\neq\frac{1}{2}$, then $\frac{P_q^{+}(1,n)}{P_q^{+}(2,n)}\to 0$ since $2\sqrt{q(1-q)}<1$. Thus, the claim follows from the proof of Theorem \ref{thm:main0} and $\mathbb{P}[\mathcal{P}_{1,n}^{+}\cap\mathcal{P}_{2,n}^{+}]\leq P_q^{+}(1,n)$. If $q=\frac{1}{2}$, then $P_q^{+}(1,n)=P_q^{+}(2,n)$ for all $n$. By the above, $\frac{\mathbb{P}[\mathcal{P}_{1,n}^{+}\cap\mathcal{P}_{2,n}^{+}]}{P_q^{+}(1,n)}\to 0$ as $n\to\infty$. Thus,
   \begin{align*}
       P_{q}^{+}(n)\sim 2P_q^{+}(1,n)\sim \frac{2\sqrt{2}}{\sqrt{\pi n}}.
   \end{align*}
\end{proof}

\bibliographystyle{siam}
\bibliography{refs}

\end{document}